\newtheorem{thm}{Theorem}
\newtheorem{prop}{Proposition}
\newtheorem{lemma}{Lemma}
\newtheorem{cor}{Corollary}
\theoremstyle{definition}
\newtheorem{rmk}{Remark}
\theoremstyle{remark}
\newtheorem{remark}{Remark}
    \def\HSt{%
       \setbox0=\hbox{$\widehat{\mathit{HS}}$}
       \setbox1=\hbox{$\mathit{HS}$}
       \dimen0=1.1\ht0
       \advance\dimen0 by 1.17\ht1
       \smash{\mskip2mu\raise\dimen0\rlap{%
          \begin{turn}{180}
              {$\widehat{\phantom{\mathit{HS}}}$}
           \end{turn}} \mskip-2mu    
                \mathit{HS}
    }{\vphantom{\widehat{\mathit{HS}}}}{}}
    \def\HMt{%
       \setbox0=\hbox{$\widehat{\mathit{HM}}$}
       \setbox1=\hbox{$\mathit{HM}$}
       \dimen0=1.1\ht0
       \advance\dimen0 by 1.17\ht1
       \smash{\mskip2mu\raise\dimen0\rlap{%
          \begin{turn}{180}
              {$\widehat{\phantom{\mathit{HM}}}$}
           \end{turn}} \mskip-2mu    
                \mathit{HM}
    }{\vphantom{\widehat{\mathit{HM}}}}{}}
    \newcommand{\HMb}{\overline{\mathit{HM}}}
\newcommand{\HMf}{\widehat{\mathit{HM}}}
    \newcommand{\HSb}{\overline{\mathit{HS}}}
        \newcommand{\HMa}{\overrightarrow{\mathit{HM}}}
\newcommand{\HSf}{\widehat{\mathit{HS}}}
\newcommand{\HMr}{{\mathit{HM}}}
\newcommand{\spin}{\mathfrak{s}}
\newcommand{\ztwo}{\mathbb{F}}
\newcommand{\Pin}{\mathrm{Pin}(2)}
\newcommand{\Rin}{\mathcal{R}}
\newcommand{\s}{\mathbf{s}}
\newcommand{\x}{\mathbf{x}}
\newcommand{\con}{\mathbf{c}}
\newcommand{\tow}{\mathcal{T}^+}
\begin{document}

\title{Indefinite Stein fillings and $\mathrm{PIN}(2)$-monopole Floer homology}

\author{Francesco Lin}
\address{Columbia University} 
\email{flin@math.columbia.edu}

\begin{abstract}Given a spin$^c$ rational homology sphere $(Y,\spin)$ with $\spin$ self-conjugate and for which the reduced monopole Floer homology $\HMr_{\bullet}(Y,\spin)$ has rank one, we provide obstructions to the intersection forms of its Stein fillings which are not negative definite. The proof of this result (and of its natural generalizations we discuss) uses $\mathrm{Pin}(2)$-monopole Floer homology.\end{abstract}

\maketitle

In the past twenty years, the topology of Stein fillings of contact three-manifolds has been a central topic of research at the intersection of symplectic geometry, gauge theory and mapping class groups (see \cite{OzSti}, \cite{CE} for introductions to the topic). One of the main goals is to classify the Stein fillings of a given contact three-manifold $(Y,\xi)$, the prototypical example given by Eliashberg's celebrated result that any Stein filling of $(S^3,\xi_{std})$ is diffeomorphic to $D^4$ \cite{Eli}. Following that, the classification has been carried over in several other cases of interest, see for example \cite{McD}, \cite{Lis}, \cite{Wen}.
\par
In this paper, we focus on the less ambitious yet very challenging task of understanding the algebraic topology (e.g. the intersection form) of the Stein fillings of a given contact three-manifold $(Y,\xi)$. While many classification results rely on the characterization of rational symplectic $4$-manifolds \cite{McD}, and therefore are mostly effective when dealing with negative definite Stein fillings, we will focus here on the case of indefinite fillings. We will discuss an application toward this goal of monopole Floer homology \cite{KM}, and in particular its $\Pin$-equivariant version defined in \cite{Lin}. The latter is a Floer theoretic analogue of the invariants introduced in \cite{Man}, and can be used to provide an alternative (but formally identical) disproof of the Triangulation Conjecture.
\par Throughout the paper, all Floer homology groups will be taken with coefficients in $\ztwo$, the field with two elements, and we will focus for simplicity on the case of a rational homology sphere $Y$. To give some context, recall that for a given spin$^c$ structure $\spin$, if the reduced Floer homology group $\HMr_{\bullet}(Y,\spin)$ vanishes then all Stein fillings of contact structures $(Y,\xi)$ for which $\spin_{\xi}=\spin$ have $b_2^+=0$ \cite{OS}, \cite{Ech}.
\par
We focus first on the next simplest case in which $\HMr_{\bullet}(Y,\spin)=\ztwo$ has rank one, under the additional assumption that $\spin$ is self-conjugate, i.e. $\spin=\bar{\spin}$ (or, equivalently, $\spin$ is induced by a spin structure). This in turn implies (see Section \ref{review} below) that either 
\begin{align*}
\HMt_{\bullet}(Y,\spin)&=\tow_{-2h(Y)}\oplus \ztwo_{-2h(Y)}\\
\HMt_{\bullet}(Y,\spin)&=\tow_{-2h(Y)}\oplus \ztwo_{-2h(Y)-1}.
\end{align*}
where $h(Y,\spin)$ is the Fr\o yshov invariant of $Y$. Here $\tow$ is the $\ztwo[[U]]$-module $\ztwo[[U,U^{-1}]/U\cdot\ztwo[[U]]$ (to which we refer as the tower), and the indices determine the grading of the lowest degree element. We call these two cases \textit{Type} $I$ and \textit{Type} $II$ respectively. By Poincar\'e duality, if $(Y,\spin)$ has Type $I$ then $(-Y,\spin)$ has Type $II$, and viceversa.
\begin{rmk}
For the reader more accustomed with Heegaard Floer homology, let us point out that under the isomorphism between the theories (\cite{Tau},\cite{CGH} or \cite{KLT} and subsequent papers) we have $\HMr_{\bullet}(Y,\spin)\equiv HF^{red}_*(Y,\spin)$ and $d(Y,\spin)=-2h(Y,\spin)$.
\end{rmk}
We have the following.
\begin{thm}\label{main}
Let $(Y,\xi)$ with $\spin_{\xi}=\spin$ self-conjugate, and suppose $\HMr_{\bullet}(Y,\spin)=\ztwo$.
\begin{itemize}
\item if $(Y,\spin)$ has Type $I$, and $(W,\omega)$ is a Stein filling of $(Y,\xi)$ which is \textbf{not} negative definite, then $W$ has even intersection form and  $b_2^+=1$ and $b_2^-=-8h(Y,\spin)+9$.
\item  if $(Y,\spin)$ has Type $II$, and $(W,\omega)$ is a Stein filling of $(Y,\xi)$ which is \textbf{not} negative definite, then $W$ has even intersection form and $b_2^+=2$ and $b_2^-=-8h(Y,\spin)+10$.
\end{itemize}

\end{thm}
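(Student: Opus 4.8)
The plan is to feed the Stein filling into the $\Pin$-monopole Floer homology machinery and extract numerical constraints from the grading shifts of the induced cobordism maps, exactly as one does with ordinary monopole Floer homology for the $b_2^+=0$ case, but now using the finer structure of the $\Pin$-equivariant theory to pin down $b_2^+$ itself rather than merely bounding it. First I would recall the standard setup: given a Stein filling $(W,\omega)$ of $(Y,\xi)$, removing a ball and capping off turns $W$ into a cobordism from $S^3$ to $Y$, and the contact invariant of $\xi$ lives in $\widehat{\HMr}{}_\bullet(-Y,\spin)$ (or the relevant flavor) and is hit by the cobordism map. The key input is that $\HMr_\bullet(Y,\spin)$ has rank one and $\spin$ is self-conjugate, so $\spin$ carries a genuine $\Pin$-action and the $\Pin$-equivariant Floer groups $\HSf,\HSb,\ldots$ of $(Y,\spin)$ are computed (in Section~\ref{review}) in terms of $h(Y,\spin)$ and the Type. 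The cobordism $W$, being spin or not according to whether its intersection form is even, induces a $\Pin$-equivariant map whose degree shift is governed by $\frac{c_1(\mathfrak s_W)^2 - 2\chi(W) - 3\sigma(W)}{4}$ together with the extra $\mathbb Z/4$-periodicity coming from the $\Pin$-module structure.

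The core steps I would carry out, in order, are: (1) observe that non-negative-definiteness forces $b_2^+(W)\ge 1$, hence by the theorem of \cite{OS},\cite{Ech} quoted in the introduction we already know $b_2^+\ge 1$ is the \emph{only} way this can happen when $\HMr$ is rank one — I must show $b_2^+$ cannot be large. (2) Use the adjunction/Bennequin-type inequality for Stein fillings to control $c_1(\mathfrak s_W)^2$: for a Stein filling one has $c_1(\mathfrak s_W)$ restricting to the (torsion, since $\spin=\bar\spin$) canonical class on the rational homology sphere $Y$, so on a non-negative-definite $W$ the square $c_1^2$ is bounded above, and in fact for an \emph{even} intersection form $c_1(\mathfrak s_W)=0$ in $H^2(W;\mathbb Z)/\mathrm{tors}$, giving $c_1^2=0$. (3) First rule out odd intersection forms: if $W$ had odd indefinite intersection form, then by Donaldson/Elkies-type diagonalization of the closed-up manifold together with the $\Pin$-symmetry (the self-conjugate spin$^c$ structure), the $\Pin$-map would have to be compatible with the $J$-action on $\HSf$, and the parity obstruction — essentially the vanishing of the Rokhlin-type invariant mismatch — forces the form to be even. (4) With $c_1^2=0$ and the form even, the grading shift of the cobordism map reduces to $-\frac{2\chi(W)+3\sigma(W)}{4} = -\frac{2(1+b_2) + 3(b_2^+ - b_2^-)}{4}$; matching this against the known gradings of the bottom of the tower and the reduced class in $\HSf(Y,\spin)$ (which differ by $1$ between Type $I$ and Type $II$) produces one linear equation. (5) Combine with the fact that the contact element must be nonzero in the image and survive the $U$- and $J$-actions: in Type $I$ this is only possible if $b_2^+=1$, in Type $II$ only if $b_2^+=2$ — this is where the distinction between the two types does its work, since the $\Pin$-tower in the two cases has its $\mathrm{im}(J)$/$\mathrm{coker}(\mathbf v)$ class shifted by one, which translates to a shift in the forced value of $b_2^+$. (6) Substituting the forced $b_2^+$ back into the grading equation and using $h(-Y,\spin)$ vs.\ $h(Y,\spin)$ and Poincar\'e duality solves for $b_2^-$, giving $b_2^- = -8h(Y,\spin)+9$ in Type $I$ and $-8h(Y,\spin)+10$ in Type $II$.

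I expect the main obstacle to be Step (5) — showing that $b_2^+$ cannot exceed the stated small value. The grading computation alone only gives a single equation relating $b_2^+$ and $b_2^-$, so one genuinely needs the $\Pin$-equivariant refinement: the point is that the image of the cobordism map in $\HSf(-Y,\spin)$ must contain the contact class, and the contact class (being the image of the generator of $\widehat{\mathit{HS}}(S^3)$, whose $\Pin$-module structure is the free $\mathcal R$-module on one generator) constrains how deep into the $J$-tower the image can reach; a filling with large $b_2^+$ would shift the map too far and land the contact class in the wrong $\mathbb Z/4$-graded piece, or kill it. Making this precise requires carefully tracking the module structure over $\mathcal R = \mathbb F[V]$ with the $J$-action (or its Floer-homotopy-theoretic avatar, the $\Pin$-Seiberg-Witten Floer spectrum), and checking that the Frøyshov-type inequality $h$ interacts with $b_2^+$ in exactly the way that bounds it by $1$ (Type $I$) or $2$ (Type $II$). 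A secondary, more technical obstacle is Step (3), ensuring the intersection form is even: this should follow from the interplay between the self-conjugate hypothesis and a Pin$(2)$-version of the computation that the Seiberg-Witten invariant / relative invariant obstructs odd forms, but one must be careful that the Stein condition (not just symplectic) is what guarantees $c_1(\mathfrak s_W)$ restricts correctly, via the Lisca-Mati\'c or Akbulut-Ozbagci embedding of the Stein filling into a closed symplectic (indeed Kähler) manifold.
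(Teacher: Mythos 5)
There is a genuine gap, and it sits exactly where you predicted: step (5). Grading considerations alone (your steps (4) and (6)), even together with the $\mathbb{Z}/4$-periodicity of $\Rin$, produce only a single linear relation between $b_2^+$ and $b_2^-$; since $b_2^-$ is a priori unconstrained, no amount of ``landing in the wrong $\mathbb{Z}/4$-graded piece'' can bound $b_2^+$ --- any grading shift can be realized by adjusting $b_2^-$. The ingredient that actually pins down $b_2^+$ in the paper is a structural computation in the \emph{bar} flavor, which your sketch never invokes: for a spin cobordism with $b_1=0$ between rational homology spheres, the induced map on $\HSb_{\bullet}\cong\ztwo[[V,V^{-1}][Q]/(Q^3)$ is multiplication by $V^k$, $QV^k$, $Q^2V^k$ when $b_2^+=0,1,2$ respectively, and vanishes when $b_2^+\geq 3$ (Proposition \ref{smallb}, a cobordism analogue of Donaldson's Theorems B and C). The argument then needs two transfers that you do not supply: first, that the $\HMf_{\bullet}$ cobordism map of $W\setminus B^4$ sends $1$ to the generator $\x$ of $\HMr_{\bullet}(Y,\spin)$ --- in the paper this comes from the Lisca--Mati\'c embedding of $W$ into a minimal K\"ahler surface of general type, whose only basic classes are $\pm K_X$, via the pairing theorem (Lemma \ref{isspin0}); second, that this information passes through the Gysin map $\iota$ (using that $\x$ lies in the image of $\iota$, Lemma \ref{observations}) and then through $p_*$, which is \emph{injective} under the rank-one hypothesis, so that $\HSb_{\bullet}(W\setminus B^4,\spin_{\omega})$ is forced to be multiplication by $QV^k$ (Type $I$) or $Q^2V^k$ (Type $II$). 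That is what yields $b_2^+=1$ versus $2$ exactly, with $b_2^-$ then determined by the grading-shift formula. Your framing via the contact invariant could be made to work (it is essentially the route of Theorem \ref{thm2}, using the naturality result of \cite{Ech}), but it requires the same Proposition-\ref{smallb}-type input, which is the mathematical heart of the theorem and is absent from your proposal.

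Step (3) is also not salvageable as primarily stated. Donaldson/Elkies diagonalization concerns negative definite forms and gives no obstruction for indefinite ones, and there is no ``Rokhlin-type parity mismatch'' argument of the kind you gesture at. The paper's route to evenness is to prove that $\spin_{\omega}$ is self-conjugate: embed $W$ in a minimal K\"ahler surface $X$ of general type with $X\setminus W$ not spin and $b_2^+(X\setminus W)\geq 1$; if $\spin_{\omega}\neq\bar{\spin}_{\omega}$, conjugation-invariance of the rank-one group $\HMr_{\bullet}(Y,\spin)$ together with the pairing theorem would produce basic classes of $X$ other than $\pm K_X$, a contradiction; self-conjugacy makes $W$ spin, hence its form even. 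You mention the Lisca--Mati\'c embedding only as a fallback, without the basic-class/conjugation argument, so this step too is incomplete as written.
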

There are plenty of rational homology spheres satisfying the hypotheses of Theorem \ref{main}, including among the others many Seifert fibered spaces. Indeed, our result should be thought as a generalization (via the classification of unimodular lattices \cite{GS}) of the results in \cite{Sti} that classify the intersection forms of Stein fillings of the Brieskorn sphere $\pm\Sigma(2,3,11)$; this has Type $II$ (resp. Type $I$) and $h=\mp1$. In particular, it is shown in \cite{Sti} that a non-negative definite filling of $-\Sigma(2,3,11)$ has intersection form $H$, while a non-negative definite filling of $\Sigma(2,3,11)$ has intersection form $2H\oplus 2E_8$. While the key input of \cite{Sti} is the existence of a nice embedding of $-\Sigma(2,3,11)$ inside the $K3$ surface as the boundary of the nucleus $N(2)$, our proof is purely Floer theoretic, hence it works when there are no explicit topological inputs.
\begin{remark}
The canonical contact structure on $\Sigma(2,3,11)$ also admits a negative definite Stein filling, namely the (deformation of the) resolution of the singularity $\{x^2+y^3+z^{11}=0\}\subset \mathbb{C}^3$. An example of indefinite Stein filling is given by the Milnor fiber of this singularity.
\end{remark}
The Brieskorn sphere with opposite orientation $-\Sigma(2,3,7)$ has Type $I$ with $h=0$; therefore all indefinite fillings have intersection form $H\oplus E_8$. We can generalize this example to all integral surgeries on the figure-eight knot $M(n)$ (where $-\Sigma(2,3,7)=M(-1)$), all of which have $\HMr_{\bullet}(M(n))=\ztwo$ supported in a self-conjugate spin$^c$ structure \cite{OSab}. These manifolds are hyperbolic for $|n|\geq5$ \cite{Mar}. Contact structures on these manifolds, and their fillability properties, have been thoroughly studied in \cite{CK}. For example, the authors construct for $n=-1,\dots,-9$ a Stein fillable contact structure $\xi$ on $M(n)$ for which $\spin_{\xi}$ is self-conjugate (as it has non-vanishing reduced contact invariant). For negative $n$, $(Y, \spin_{\xi})$ has Type $I$ and $h=-\frac{n+1}{8}$, so that our main result implies that any indefinite Stein filling of $(M(n),\xi)$ has $b_2^+=1$ and $b_2^-=10+n$.
\\
\par
Our first theorem has consequences regarding finiteness questions for Stein fillings. In particular, it was conjectured (Conjecture $12.3.16$ of \cite{OzSti}) that given a contact manifold $(Y,\xi)$, the Euler characteristic of its Stein fillings can only attain finitely many values. While this was later shown to be false \cite{BV}, it is still of great interest to understand for which classes of contact three-manifolds this finiteness condition holds. On our end, we have the following.
\begin{cor}\label{maincor}
Given a contact rational homology sphere $(Y,\xi)$, suppose $\spin_{\xi}=\spin$ is self-conjugate and $\HMr_{\bullet}(Y,\spin)$ has rank one. Then the set of Euler characteristics of Stein fillings of $(Y,\xi)$ is finite.
\end{cor}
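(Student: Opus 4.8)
The plan is to bound the Euler characteristic of a Stein filling by controlling $b_1$, $b_2^+$ and $b_2^-$ separately, invoking Theorem \ref{main} for the fillings that are not negative definite and a classical gauge-theoretic inequality for the negative definite ones.

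First I would record that any Stein filling $(W,\omega)$ of a contact rational homology sphere satisfies $b_1(W)=0$. Indeed, a Stein domain is homotopy equivalent to a CW complex of dimension at most two, so $H^3(W;\mathbb{Q})=0$; by Lefschetz duality $H_1(W,Y;\mathbb{Q})\cong H^3(W;\mathbb{Q})=0$, and since $H_1(Y;\mathbb{Q})=0$ the long exact sequence of the pair forces $H_1(W;\mathbb{Q})=0$. Consequently $\chi(W)=1+b_2^+(W)+b_2^-(W)$, so it suffices to bound $b_2^+(W)+b_2^-(W)$ over all Stein fillings.

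Now, since $\HMr_{\bullet}(Y,\spin)$ has rank one and $\spin$ is self-conjugate, $(Y,\spin)$ is of Type $I$ or of Type $II$ (as recalled before Theorem \ref{main}). If $(W,\omega)$ is a Stein filling that is \textbf{not} negative definite, then Theorem \ref{main} determines $b_2^+(W)$ and $b_2^-(W)$ outright, and hence pins down $\chi(W)$ to a single value among such fillings. It then remains to bound $b_2(W)$ for negative definite Stein fillings. Here I would apply the Fr\o yshov inequality (equivalently, the Ozsv\'ath--Szab\'o $d$-invariant inequality for negative definite fillings of rational homology spheres) to the canonical spin$^c$ structure $\mathfrak{t}$ of the Stein structure on $W$, which satisfies $\mathfrak{t}|_Y=\spin_{\xi}=\spin$, obtaining
\[
c_1(\mathfrak{t})^2+b_2^-(W)\ \le\ 4\,d(Y,\spin)\ =\ -8h(Y,\spin).
\]
Since $W$ is negative definite we have $b_2^-(W)=b_2(W)$, and its rational intersection form (nondegenerate because $Y$ is a rational homology sphere) is negative definite, so $c_1(\mathfrak{t})^2\le 0$; hence $b_2(W)\le -8h(Y,\spin)$. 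Thus $\chi(W)=1+b_2(W)$ takes only finitely many values on negative definite fillings, and combining this with the previous case the set of Euler characteristics of all Stein fillings of $(Y,\xi)$ is finite.

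I do not anticipate a serious obstacle: essentially all the content is already in Theorem \ref{main}, which disposes of the fillings with $b_2^+>0$, while the negative definite case is handled by a standard inequality with no new input. The only points that need a word are the vanishing $b_1(W)=0$ (so that a bound on $b_2$ bounds $\chi$) and the existence of a spin$^c$ structure on $W$ restricting to $\spin$ — both automatic for Stein fillings of rational homology spheres.
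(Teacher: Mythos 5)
Your reduction to bounding $b_2^++b_2^-$ (via $b_1(W)=0$) and your treatment of the fillings that are not negative definite are fine: there Theorem \ref{main} pins down $\chi(W)$ exactly. The gap is in the negative definite case. From the Fr\o yshov/Ozsv\'ath--Szab\'o inequality $c_1(\mathfrak{t})^2+b_2^-(W)\le 4d(Y,\spin)$ together with $c_1(\mathfrak{t})^2\le 0$ you conclude $b_2(W)\le 4d(Y,\spin)$, but the inequality goes the wrong way: it yields $b_2(W)\le 4d(Y,\spin)-c_1(\mathfrak{t})^2$, and since $c_1(\mathfrak{t})^2\le 0$ this is \emph{weaker}, not stronger, than the bound you want. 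To get $b_2(W)\le 4d(Y,\spin)$ you would need a \emph{lower} bound $c_1(\mathfrak{t})^2\ge 0$, which fails in a negative definite form unless $c_1$ is rationally trivial. Indeed no argument using only negative definiteness and the $d$-invariant inequality can bound $b_2$: blow-ups of a negative definite manifold bounding $Y$ stay negative definite and still satisfy the inequality (for suitable spin$^c$ structures) with arbitrarily large $b_2$, so the inequality cannot distinguish these from Stein fillings. Some Stein-specific input is unavoidable here.

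The paper's proof supplies exactly that input and needs no case division: by Theorem $12.3.14$ of \cite{OzSti} there is a constant $C$ depending only on $(Y,\xi)$ with $3\sigma(W)+2\chi(W)\ge C$ for every Stein filling, which with $b_1(W)=0$ reads $5b_2^+(W)\ge b_2^-(W)+C$; since Theorem \ref{main} (together with $b_2^+=0$ in the definite case) bounds $b_2^+(W)$ by $2$, this bounds $b_2^-(W)$ and hence $\chi(W)$ uniformly. Your argument becomes correct if you replace the Fr\o yshov inequality step for negative definite fillings by this same geography constraint (with $b_2^+=0$ it gives $b_2^-(W)\le 2-C$); as written, however, the negative definite case is not established.
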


Even though the statement of Theorem \ref{main} itself does not involve $\Pin$-monopole Floer homology \cite{Lin}, its proof relies on it. Indeed, $\Pin$-symmetry has been already fruitfully exploited to study the topology of closed symplectic $4$-manifolds \cite{Bau}, \cite{Li}, and our result can be thought as an analogue of those in the case of Stein fillings. The key observation is that the bar version of $\Pin$-monopole Floer homology $\HSb_{\bullet}$, unlike its standard counterpart $\HMb_{\bullet}$, is non-trivial for cobordisms with small positive $b_2^+$ \cite{Lin3}; this in turn is a version for cobordisms of the classical Donaldson Theorems B and C \cite{DK}.
\\
\par
More generally, we can provide obstructions on the intersection form of the indefinite Stein fillings of $(Y,\xi)$ in terms of the interaction with $\Pin$-monopole Floer homology of the contact invariant $\con(\xi)\in\HMt_{\bullet}(-Y,\spin_{\xi})$ defined in \cite{KMOS}; as the precise statements require some additional background and terminology (which we review in Section \ref{review}), we postpone them to Section \ref{continv}. Let us point out that the obstructions we provide are applicable in concrete situations; for example, we will show that for a certain Stein fillable contact structure on the Brieskorn homology sphere $\Sigma(2,8k+3,16k+7)$ constructed in \cite{Kar}, all indefinite Stein fillings have intersection form $2H\oplus E_8$. Notice that, provided $k>1$, $\HMr_{\bullet}(\Sigma(2,8k+3,16k+7))$ has rank strictly larger than $1$.
\\
\par
\textit{Acknowledgements.} The author would like to thank Andr\'as Stipsicz for a helpful conversation on the subject. This work was partially funded by NSF grant DMS-1807242.

\vspace{0.3cm}

\section{$\Pin$-monopole Floer homology}\label{review}
We will review some background on the $\Pin$-monopole Floer homology groups defined in \cite{Lin}. We will assume the reader to be familiar with the formal properties of usual monopole Floer homology \cite{KM}; for a gentle introduction, we refer the reader to \cite{Lin2}. Consider $(Y,\spin)$ a rational homology sphere equipped with a self-conjugate spin$^c$ structure. The Seiberg-Witten equations then admit a $\Pin$-symmetry, where
\begin{equation*}
\Pin=S^1\cup j\cdot S^1\subset \mathbb{H}.
\end{equation*}
In particular, the action of $j$ induces an involution $\jmath$ on the moduli spaces of configurations $\mathcal{B}(Y,\spin)$. One can then exploit this extra symmetry to provide refined Floer theoretic invariants
\begin{equation}\label{longex}
\dots\stackrel{i_*}{\longrightarrow}\HSt_{\bullet}(Y,\spin)\stackrel{j_*}{\longrightarrow} \HSf_{\bullet}(Y,\spin)\stackrel{p_*}{\longrightarrow} \HSb_{\bullet}(Y,\spin)\stackrel{i_*}{\longrightarrow}\dots
\end{equation}
which are read respectively \textit{H-S-to}, \textit{H-S-from} and \textit{H-S-bar}. These are also absolutely graded modules over the graded ring 
\begin{equation*}
\Rin=\ztwo[[V]][Q]/(Q^3)
\end{equation*}
where $V$ and $Q$ have degree respectively $-4$ and $-1$. We will be mostly interested in the $\HSf_{\bullet}(Y,\spin)$ version. This invariant fits in the Gysin exact sequence

\begin{center}
\begin{tikzpicture}
\matrix (m) [matrix of math nodes,row sep=1em,column sep=0.5em,minimum width=2em]
  {
 \HSf_{\bullet}(Y,\spin)  && \HSf_{\bullet}(Y,\spin)\\
  &\HMf_{\bullet}(Y,\spin)\\};
  \path[-stealth]
  (m-1-1) edge node [above]{$\cdot Q$} (m-1-3)
   (m-2-2) edge node [left]{$\pi_*$}(m-1-1)
  (m-1-3) edge node [right]{$\iota$} (m-2-2)  
  ;
\end{tikzpicture}
\end{center}
where $\pi_*$ and $\iota$ have degree $0$. This is an exact sequence of $\Rin$-modules, where the $\Rin$-action on $\HMf_{\bullet}(Y,\spin)$ is obtained by sending $Q$ to $0$ and $V$ to $U^2$. In the case $Y=S^3$, we have $\HSf_{\bullet}(S^3)\equiv \Rin\langle-1\rangle$ and this fits together with $\HMf_{\bullet}(S^3)=\ztwo[[U]]\langle-1\rangle$ in gradings between $-4k$ and $-4k-3$ for $k\geq0$ as follows:
\begin{center}
\begin{tikzpicture}
\matrix (m) [matrix of math nodes,row sep=1em,column sep=1em,minimum width=2em]
  {\cdot&\cdot&\cdot\\
 \ztwo  &\ztwo&\ztwo\\
  \ztwo&\cdot&\ztwo\\
  \ztwo & \ztwo&\ztwo\\};
  \path[-stealth]
  (m-2-1) edge node{} (m-2-2)
   (m-2-3) edge node{} (m-3-1)
    (m-3-3) edge node{} (m-4-1)
     (m-4-2) edge node{} (m-4-3)
  ;
\end{tikzpicture}
\end{center}
Here the three columns represent (left to right) $\HSf_{\bullet}$, $\HMf_{\bullet}$ and $\HSf_{\bullet}$ respectively, and the horizontal maps are $\iota$ and $\pi_*$.
\par
In general, we always have for a rational homology sphere that, up to grading shift,
\begin{equation}\label{hsb}
\HSb_{\bullet}(Y,\spin)\equiv\ztwo[[V,V^{-1}][Q]/(Q^3),
\end{equation}
and that $p_*$ is an isomorphism in degrees low enough and zero in degrees high enough.
\\
\par
Using the Gysin exact sequence, one can show the following \cite{HKL}.
\begin{thm}\label{rankone}[Corollary $2$ of \cite{HKL}]
Suppose $\spin$ is self-conjugate, and $\HMr_{\bullet}(Y,\spin)=1$. Then the extra $\ztwo$ summand is in degree either $-2h(Y,\spin)$ or $-2h(Y,\spin)-1$.
\end{thm}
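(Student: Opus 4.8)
The plan is to use the Gysin sequence to pin down the $\Rin$‑module $\HSf_\bullet(Y,\spin)$ almost completely, and then to read off the grading of the reduced class. Set $h=h(Y,\spin)$. Passing from $\HMt_\bullet$ to $\HMf_\bullet$ through the standard long exact sequences (and using $\HMb_\bullet\cong\ztwo[[U,U^{-1}]$), the hypothesis $\HMr_\bullet(Y,\spin)=\ztwo$ is equivalent to
\[
\HMf_\bullet(Y,\spin)\ \cong\ \ztwo[[U]]\langle -2h-1\rangle\ \oplus\ \ztwo\langle b\rangle
\]
for a single integer $b$ (the first summand being the tower, with top class in grading $-2h-1$), and the statement to be proved translates into $b\in\{-2h-1,\,-2h\}$, i.e. into $d_0\in\{-2h,\,-2h-1\}$ for the grading $d_0$ of the extra summand of $\HMt_\bullet(Y,\spin)$.

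The first step is to unwind the Gysin sequence of Section~\ref{review}. Since $\iota$ annihilates $Q\cdot\HSf_\bullet$ and $\pi_*$ has image contained in $\ker(\cdot Q)$, exactness at the two copies of $\HSf_\bullet$ collapses the long sequence into a short exact sequence of graded $\ztwo$‑vector spaces
\[
0\ \longrightarrow\ \HSf_\bullet\big/Q\,\HSf_\bullet\ \stackrel{\iota}{\longrightarrow}\ \HMf_\bullet\ \stackrel{\pi_*}{\longrightarrow}\ \ker(\cdot Q)\ \longrightarrow\ 0 .
\]
Decomposing $\HSf_\bullet$ as a graded $\ztwo[Q]/(Q^3)$‑module into ``$Q$‑chains'' of length $1,2,3$, the two outer terms are the spans of, respectively, the tops and the bottoms of these chains, so $\dim_{\ztwo}\HMf_n$ equals the number of $Q$‑chains of $\HSf_\bullet$ with top in grading $n$ plus the number with bottom in grading $n$; in particular each $Q$‑chain of $\HSf_\bullet$ contributes total rank $2$ to $\HMf_\bullet$.

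Next I would feed in the structural facts recalled in Section~\ref{review}: $p_*$ is an isomorphism $\HSf_\bullet\xrightarrow{\ \sim\ }\HSb_\bullet\cong\ztwo[[V,V^{-1}][Q]/(Q^3)$ in all sufficiently negative gradings, the target being a sum of length‑$3$ chains (one in each grading $\equiv\delta\pmod4$, with $\langle\delta\rangle$ the grading shift), while $\HSf_\bullet$ is bounded above with finitely generated reduced part. Hence $\HSf_\bullet\cong\Rin\langle\sigma\rangle\oplus\HSr_\bullet$ with $\Rin=\ztwo[[V]][Q]/(Q^3)$, $\sigma\equiv\delta\pmod4$, and $\HSr_\bullet$ finite; by the rank formula the summand $\Rin\langle\sigma\rangle$ accounts exactly for the submodule $\ztwo[[U]]\langle\sigma\rangle\subseteq\HMf_\bullet$. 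Matching this against the tower $\ztwo[[U]]\langle -2h-1\rangle$ forces $\sigma\le -2h-1$, and the finite piece $\HSr_\bullet$ — which contributes an even total number of top/bottom ``units'' — must simultaneously fill the tower of $\HMf_\bullet$ between gradings $\sigma$ and $-2h-1$ and create the extra class in grading $b$. A short parity and counting argument then gives $\sigma\equiv -2h-3\pmod4$ with $\sigma<-2h-1$, and, because the reduced rank is exactly one, forces $\HSr_\bullet$ to be a single $Q$‑chain of length $\le 3$ with one endpoint in grading $-2h-1$ and the other in grading $b$. This already confines $b$ to a short explicit list.

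The remaining — and hardest — step is to cut this list down to $\{-2h-1,-2h\}$, i.e. to rule out a reduced class two or three gradings away from the tower: such configurations survive the bare $\ztwo[[U]]$‑module bookkeeping, so one has to use the $\Pin$‑equivariance genuinely. I would run the parallel analysis for the to‑flavours (the analogue of the Gysin sequence relating $\HSt_\bullet$ and $\HMt_\bullet$, together with the $\Pin$ long exact sequence $\HSt_\bullet\to\HSf_\bullet\to\HSb_\bullet\to\HSt_\bullet$ and the rigidity $\HSb_\bullet\cong\ztwo[[V,V^{-1}][Q]/(Q^3)$), and combine it with Poincar\'e duality between $(Y,\spin)$ and $(-Y,\spin)$; the various constraints taken together pin the length of the $Q$‑chain $\HSr_\bullet$ and the positions of its endpoints, leaving only the length‑$1$ chain in grading $-2h-1$ (giving $b=-2h-1$, Type $II$) and the length‑$2$ chain with top in grading $-2h$ (giving $b=-2h$, Type $I$). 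Showing that the ``off by two or three'' cases are incompatible with the full $\Pin$‑structure, even though they are invisible to the $\ztwo[[U]]$‑module structure alone, is the crux of the argument.
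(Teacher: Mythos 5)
The paper itself offers no proof of this statement: it is quoted as Corollary 2 of \cite{HKL}, with only the remark that it follows from the Gysin exact sequence, so your general route (the Gysin triangle for $\HSf_\bullet$ and $\HMf_\bullet$ together with the rigidity of $\HSb_\bullet$) is indeed the intended one. But as written your argument has a genuine flaw in the middle and a genuine gap at the end. The flaw: from $p_*$ being an isomorphism in sufficiently negative degrees you conclude $\HSf_\bullet(Y,\spin)\cong\Rin\langle\sigma\rangle\oplus(\text{finite})$ as $\Rin$-modules. That does not follow: the hypothesis only yields three infinite $\ztwo[[V]]$-towers whose top degrees are constrained modulo $4$, and these need not be aligned in consecutive degrees $\sigma,\sigma-1,\sigma-2$. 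In fact your claim fails precisely in the answers you are trying to derive: in Type I, $\HSf_\bullet=\ztwo[[V]]\langle-2h\rangle\oplus\ztwo[[V]]\langle-2h-1\rangle\oplus\ztwo[[V]]\langle-2h-3\rangle$, which is not of the form $\Rin\langle\sigma\rangle\oplus(\text{finite})$; so the bookkeeping you build on it ($\sigma<-2h-1$, a single finite $Q$-chain with one endpoint in degree $-2h-1$, etc.) would, taken literally, exclude the Type I case itself.

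The gap: the step that actually proves the theorem --- ruling out every position of the extra class other than $-2h$ and $-2h-1$ --- is only announced (``I would run the parallel analysis\dots'', ``the crux of the argument'') and never carried out, and it cannot be completed by the parity/chain counting you set up. Concretely, let $\HSf_\bullet$ consist of one length-$3$ $Q$-chain with top in degree $-2h+1$ together with length-$3$ chains with tops in degrees $-2h-3-4k$, $k\geq 0$: this is compatible with your short exact sequence, with the $V$-equivariance of $\iota$ and $\pi_*$ (one checks that $V$ is forced to annihilate the top class, which causes no contradiction at this level), and with agreement with $\HSb_\bullet$ in all low degrees, yet it would place the reduced class in degree $-2h+1$. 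What excludes such configurations is the rest of the $\Pin$ package, which you list but never use where it matters: by exactness of (\ref{longex}) the kernel of $p_*:\HSf_\bullet\to\HSb_\bullet$ equals $\mathrm{im}\,j_*$ and is a finite-dimensional $\Rin$-submodule, while $V$ acts invertibly on $\HSb_\bullet$; in the configuration above the $V$-annihilated top class must lie in $\ker p_*$, and then so does the whole infinite $V$-tower generated by its $Q^2$-image, contradicting finiteness. An argument of this kind, using the full $\Rin$-module structure and the finiteness of the $\Pin$-reduced part rather than graded dimensions alone, is exactly what is missing from your outline (and is the content of the cited Corollary 2 of \cite{HKL}).
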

Accordingly, we have that one of the two alternatives
\begin{align*}
\HMf_{\bullet}(Y,\spin)&=\ztwo[[U]]_{-2h(Y,\spin)-1}\oplus \ztwo_{-2h(Y,\spin)}\\
\HMf_{\bullet}(Y,\spin)&=\ztwo[[U]]_{-2h(Y,\spin)-1}\oplus \ztwo_{-2h(Y,\spin)-1}
\end{align*}
holds, corresponding to our definitions of Type $I$ and $II$. When $(Y,\spin)$ has Type $I$, we have the identification as $\ztwo[[V]]$-modules
\begin{equation*}
\HSf_{\bullet}(Y,\spin)=\ztwo[[V]]\langle-2h-3\rangle\oplus\ztwo[[V]]\langle-2h\rangle\oplus \ztwo[[V]]\langle-2h-1\rangle
\end{equation*}
and the action of $Q$ sends each column injectively into the next one. The corresponding high degree part of the Gysin exact sequence looks like
\begin{center}
\begin{tikzpicture}
\matrix (m) [matrix of math nodes,row sep=1em,column sep=1em,minimum width=2em]
  {
 \ztwo  &\ztwo&\ztwo\\
  \ztwo&\ztwo&\ztwo\\
  \cdot & \cdot&\cdot\\
  \ztwo  &\ztwo&\ztwo\\
  \ztwo&\cdot&\ztwo\\
  \ztwo & \ztwo&\ztwo\\};
  \path[-stealth]
  (m-1-1) edge node{} (m-1-2)
   (m-1-3) edge node{} (m-2-1)
    (m-2-2) edge node{} (m-2-3)
  (m-4-1) edge node{} (m-4-2) 
   (m-6-2) edge node{} (m-6-3)
    (m-4-3) edge node{} (m-5-1)   
     (m-5-3) edge node{} (m-6-1) 
  ;
\end{tikzpicture}
\end{center}
When $(Y,\spin)$ has Type $II$, we have the identification as $\ztwo[[V]]$-modules
\begin{equation*}
\HSf_{\bullet}(Y,\spin)=\ztwo[[V]]\langle-2h-3\rangle\oplus\ztwo[[V]]\langle-2h-4\rangle\oplus \ztwo[[V]]\langle-2h-1\rangle
\end{equation*}
and the action of $Q$ sends each column injectively into the next one. The corresponding high degree part of the Gysin exact sequence looks like
\begin{center}
\begin{tikzpicture}
\matrix (m) [matrix of math nodes,row sep=1em,column sep=1em,minimum width=2em]
  {
 \ztwo  &\ztwo\oplus \ztwo&\ztwo\\
  \cdot&\cdot&\cdot\\
  \ztwo  &\ztwo&\ztwo\\
  \ztwo&\cdot&\ztwo\\
  \ztwo & \ztwo&\ztwo\\};
  \path[-stealth]
  (m-1-1) edge node{} (m-1-2) 
  (m-1-2) edge node{} (m-1-3) 
  (m-1-2.south east) edge[bend left, dotted] node{} (m-3-2)
   (m-3-2) edge[bend left, dotted] node{} (m-5-2)
   (m-3-1) edge node{} (m-3-2) 
   (m-5-2) edge node{} (m-5-3) 
   (m-3-3) edge node{} (m-4-1) 
   (m-4-3) edge node{} (m-5-1) 
   ;
\end{tikzpicture}
\end{center}
Here, in the middle column we have highlighted the $U$-action on the $\ztwo[[U]]$-summand using dotted arrows. We record the following observations.
\begin{lemma}\label{observations}
Suppose $\HMr_{\bullet}(Y,\spin)=\ztwo$. Then the map $p_*$ in $\Pin$-monopole Floer homology is injective. Furthermore, the generator of $\HMr_{\bullet}(Y,\spin)$ is in the image of $\iota$.
\end{lemma}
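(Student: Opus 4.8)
The plan is to extract both assertions from a single structural input: once we know $\HMr_\bullet(Y,\spin)=\ztwo$, the dichotomy of Theorem~\ref{rankone} applies, and the Type $I$/Type $II$ descriptions of $\HSf_\bullet(Y,\spin)$ recalled above exhibit $\HSf_\bullet(Y,\spin)$ as a \emph{free} $\ztwo[[V]]$-module of rank three; in particular it is $V$-torsion-free, i.e.\ multiplication by $V$ is injective on it. I will also use, as usual, that the maps in \eqref{longex} and in the Gysin exact sequence are $\ztwo[[V]]$-linear (the Gysin sequence is an exact sequence of $\Rin$-modules, and likewise $p_*$ is $V$-equivariant).

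For the injectivity of $p_*\colon \HSf_\bullet(Y,\spin)\to \HSb_\bullet(Y,\spin)$, I would use the general fact recorded after \eqref{hsb} that $p_*$ is an isomorphism in all sufficiently negative degrees. Suppose $x\in\HSf_\bullet(Y,\spin)$ is nonzero with $p_*(x)=0$. Torsion-freeness gives $V^nx\neq0$ for every $n\ge0$, and since $V$ has degree $-4$ we have $\deg(V^nx)=\deg(x)-4n\to-\infty$; hence for $n$ large $V^nx$ lies in a degree in which $p_*$ is injective, contradicting $p_*(V^nx)=V^np_*(x)=0$. Therefore $\ker p_*=0$.

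For the statement about the reduced generator, I would argue as follows. Exactness of the Gysin sequence at $\HMf_\bullet(Y,\spin)$ gives $\mathrm{im}(\iota)=\ker\bigl(\pi_*\colon\HMf_\bullet(Y,\spin)\to\HSf_\bullet(Y,\spin)\bigr)$, so it suffices to show that the generator $g\in\HMr_\bullet(Y,\spin)\subset\HMf_\bullet(Y,\spin)$ satisfies $\pi_*(g)=0$. Now $\HMr_\bullet(Y,\spin)=\ztwo$ is one-dimensional over $\ztwo$ and $U$-power-torsion, so $U$ must act on it as $0$; since the $\Rin$-action on $\HMf_\bullet(Y,\spin)$ sends $V$ to $U^2$, this forces $Vg=U^2g=0$. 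Consequently $V\cdot\pi_*(g)=\pi_*(Vg)=0$, so $\pi_*(g)$ is a $V$-torsion element of the $V$-torsion-free module $\HSf_\bullet(Y,\spin)$, whence $\pi_*(g)=0$ and $g\in\ker\pi_*=\mathrm{im}(\iota)$.

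I do not expect a genuine obstacle; the modest subtlety is the Type $II$ case, where the reduced generator sits in the same degree $-2h(Y,\spin)-1$ as the top of the $U$-tower in $\HMf_\bullet(Y,\spin)$, so it cannot be isolated by the grading and one really uses that it is annihilated by $U$. (If one prefers to regard $\HMr_\bullet$ as the quotient of $\HMf_\bullet$ by the tower rather than as a summand, the same computation shows that the $U$-torsion submodule of $\HMf_\bullet(Y,\spin)$ lies in $\ker\pi_*$ and surjects onto $\HMr_\bullet(Y,\spin)$, which gives the conclusion.) The real content is just the observation that the Type $I$/$II$ structure makes $\HSf_\bullet(Y,\spin)$ torsion-free over $\ztwo[[V]]$; granting that, both halves are two-line arguments.
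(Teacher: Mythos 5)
Your proof is correct and is essentially the paper's own argument: the paper records this lemma as an observation read off from the Type $I$/Type $II$ identifications of $\HSf_{\bullet}(Y,\spin)$ as a free rank-three $\ztwo[[V]]$-module together with the displayed Gysin sequences, and those are exactly the inputs you use. Your packaging --- injectivity of $p_*$ from $V$-torsion-freeness plus the fact that $p_*$ is an isomorphism in sufficiently low degrees, and $g\in\mathrm{im}(\iota)=\ker(\pi_*)$ from $Vg=U^2g=0$ and torsion-freeness of the target --- is just a cleanly written-out, type-uniform version of that inspection, with all equivariance facts ($\Rin$-linearity of the maps, $V\mapsto U^2$ on $\HMf_{\bullet}$) as stated in the paper.
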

A cobordism $(W,\spin)$ with $\spin=\bar{\spin}$ induces a map on the corresponding $\Pin$-monopole Floer homology groups; this fits in a natural commutative diagram with the Gysin exact sequence and the map induced on the usual Floer homology groups. The key observation underlying our results is the following.
\begin{prop}\label{smallb}
Given two rational homology spheres $(Y_0,\spin_0)$ and $(Y_1,\spin_1)$ equipped with self-conjugate spin$^c$ structures, and consider a cobordism $(W,\spin)$ with $\spin$ self-conjugate and $b_1=0$ between them. Consider the induced map
\begin{equation*}
\HSb_{\bullet}(W,\spin):\HSb_{\bullet}(Y_0,\spin_0)\rightarrow \HSb_{\bullet}(Y_1,\spin_1),
\end{equation*}
where we can identify both groups up to grading shift as in Equation (\ref{hsb}). Then:
\begin{itemize}
\item if $b_2^+=0$, $\HSb_{\bullet}(W,\spin)$ acts as multiplication by $V^k$ for some $k$ (so it is an isomorphism);
\item if $b_2^+=1$, $\HSb_{\bullet}(W,\spin)$ acts as multiplication by $QV^k$ for some $k$;
\item if $b_2^+=2$, $\HSb_{\bullet}(W,\spin)$ acts as multiplication by $Q^2V^k$ for some $k$;
\item if $b_2^+\geq3$, $\HSb_{\bullet}(W,\spin)$ vanishes.
\end{itemize}
In all cases, the value of $k$ is determined by the grading shift, which is given by the formula $\frac{1}{4}(5b_2^+-b_2^-)$.
\end{prop}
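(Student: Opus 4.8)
The strategy is to combine the $\Rin$-module structure of the bar groups with the computation of the reducible Seiberg--Witten configurations on $W$; the latter is the cobordism incarnation of Donaldson's Theorems B and C carried out in \cite{Lin3}. For the algebraic part, recall that $\HSb_\bullet(W,\spin)$ is a homomorphism of $\Rin$-modules, and since $V$ acts invertibly on the bar group of a rational homology sphere the map is automatically linear over the localization $\Rin[V^{-1}]$, over which $\HSb_\bullet(Y_0,\spin_0)$ is free of rank one by Equation (\ref{hsb}). Hence $\HSb_\bullet(W,\spin)$ is multiplication by the image of the generator; being homogeneous, and every nonzero homogeneous element of $\Rin[V^{-1}]$ being a scalar multiple of exactly one of $V^k$, $QV^k$, $Q^2V^k$, the map is either zero or multiplication by one of these monomials, with the monomial and the exponent $k$ determined by the degree. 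That degree is the usual grading shift of a monopole cobordism map, which the index theorem evaluates to the stated $\tfrac{1}{4}(5b_2^+-b_2^-)$ (up to the known, $h$-dependent normalizations of the two ends built into Equation (\ref{hsb})); so the statement reduces to showing that the $Q$-exponent equals $b_2^+$ when $b_2^+\le 2$, and that the map vanishes when $b_2^+\ge 3$.

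To identify the $Q$-exponent I would analyze the reducible locus on $W$ as in ordinary monopole Floer homology, the difference being that one now works $\Pin$-equivariantly, over $H^*(B\Pin;\ztwo)=\ztwo[Q,V]/(Q^3)$ rather than over $H^*(BS^1;\ztwo)=\ztwo[U]$. Because $b_1=0$, the moduli space of reducibles on $W$ with cylindrical ends on the unique reducibles of $Y_0$ and $Y_1$ is a point, and $\HSb_\bullet(W,\spin)$ is multiplication by the $\Pin$-equivariant Euler class of its obstruction bundle. Under the $\Pin$-symmetry $j\in\Pin$ acts on the form directions, hence on the self-dual harmonic forms $H^+(W;\mathbb{R})$, by $-1$: these constitute the sign representation $\tilde{\mathbb{R}}^{b_2^+}$ of $\Pin/S^1=\mathbb{Z}/2$ and contribute the factor $Q^{b_2^+}$, while $j$ acts on the spinor directions by the quaternionic structure, so the Dirac index bundle is virtually quaternionic and contributes a power $V^k$ of $V$, exactly as in the ordinary theory. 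Thus $\HSb_\bullet(W,\spin)$ is multiplication by $Q^{b_2^+}V^k$, which --- as $Q^3=0$ --- is the nonzero monomial $V^k$, $QV^k$, $Q^2V^k$ for $b_2^+=0,1,2$ and vanishes for $b_2^+\ge 3$; the $Q$-exponent $b_2^+$ is consistent with the degree count of the first step, which then fixes $k$.

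The step I expect to be the genuine obstacle is making the second paragraph rigorous with cylindrical ends: transversality for the reducible moduli space, the finite-dimensional approximation / Conley-index package of \cite{Lin}, and the orientation bookkeeping needed for the ``multiplication by the $\Pin$-equivariant Euler class'' description to hold literally --- and, most importantly, for the resulting monomial to be genuinely \emph{nonzero} when $b_2^+\le 2$; this nonvanishing is precisely the cobordism analogue of Donaldson's Theorems B and C established in \cite{Lin3}. As a partial check that needs none of this machinery, note that the ordinary bar cobordism map $\HMb_\bullet(W,\spin)$ already vanishes once $b_2^+\ge 1$ (there the obstruction bundle is $S^1$-trivial of positive rank $b_2^+$), so in the Gysin square relating $\HSb_\bullet$ to $\HMb_\bullet$, where $\iota\colon\HSb_\bullet\to\HMb_\bullet$ has kernel $Q\cdot\HSb_\bullet$, the image of $\HSb_\bullet(W,\spin)$ is forced into $Q\cdot\HSb_\bullet$; this yields $Q$-exponent $\ge 1$, but neither its exact value nor the nonvanishing for $b_2^+\in\{1,2\}$, which are the genuinely new content.
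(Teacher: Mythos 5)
Your overall reduction is sound and, at the level of mechanism, matches the paper's: since each bar group is free of rank one over $\ztwo[[V,V^{-1}][Q]/(Q^3)$ and $V$ acts invertibly, the cobordism map is multiplication by a single homogeneous element, which (if nonzero) is the unique monomial in the degree prescribed by the grading shift; so the whole content is the $Q$-exponent, i.e.\ nonvanishing for $b_2^+\le 2$ and vanishing for $b_2^+\ge 3$. For $b_2^+\le 2$ you quote \cite{Lin}, \cite{Lin3}, exactly as the paper does, and your identification of the obstruction as the sign representation of $\jmath$ on the self-dual forms, with the Dirac index contributing only powers of $V$, is the correct geometric input behind those computations.

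The part of the proposition that is new in this paper is the vanishing for $b_2^+\ge 3$, and that is precisely where your argument stops short of a proof: you deduce it from the assertion that $\HSb_{\bullet}(W,\spin)$ is literally multiplication by the $\Pin$-equivariant Euler class $Q^{b_2^+}V^k$ of an obstruction bundle over a point-like reducible, a step you yourself flag as unproved. In the blow-up formalism in which these maps are defined, the reducible locus is not a point but a projective space: for $b_2^+=4k+2$ it is $\mathbb{C}P^{2k+1}$ with the free involution $[z_1:w_1:\cdots]\mapsto[-\bar{w}_1:\bar{z}_1:\cdots]$, and the relevant count is the mod $2$ number of zeros of a generic equivariant map $\mathbb{C}P^{2k+1}\to\mathbb{R}^{4k+2}$. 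The paper proves this count is zero for $k\ge1$ by exhibiting a nowhere-vanishing equivariant map $\mathbb{C}P^{2k+1}\to\mathbb{R}^{3k+3}$ built from $k+1$ copies of the squaring map $(\alpha,\beta)\mapsto\bigl(\tfrac{1}{2}(|\alpha|^2-|\beta|^2),\alpha\bar{\beta}\bigr)$, disposes of $b_2^+\equiv 3\pmod 4$ by pure grading reasons, and obtains the remaining residues by stabilization. Your Euler-class computation is the correct cohomological shadow of this argument: the count equals the pairing of $Q^{b_2^+}$ against the fundamental class of $\mathbb{C}P^{2k+1}/\jmath$, whose cohomology is $\ztwo[Q,V]/(Q^3,V^{k+1})$, so $Q^3=0$ is indeed the reason for vanishing. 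But to make this a proof you must either justify the Euler-class description of the cobordism map within the blow-up/perturbation framework, or argue geometrically as the paper does; as written, the decisive step for $b_2^+\ge3$ is asserted rather than established.
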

Graphically, the cases $b_2^+=0,1,2$ look like
\begin{center}
\begin{tikzpicture}
\matrix (m) [matrix of math nodes,row sep=1em,column sep=1em,minimum width=2em]
  {
 \ztwo  &\ztwo&\quad&\ztwo&\ztwo&\quad&\ztwo&\ztwo\\
  \ztwo&\ztwo&\quad&\ztwo&\ztwo&\quad&\ztwo&\ztwo\\
  \ztwo  &\ztwo&\quad&\ztwo&\ztwo&\quad&\ztwo&\ztwo\\};
  \path[-stealth]
  (m-1-1) edge node{} (m-1-2) 
    (m-2-1) edge node{} (m-2-2) 
 (m-3-1) edge node{} (m-3-2) 
 (m-1-4) edge node{} (m-2-5) 
  (m-2-4) edge node{} (m-3-5) 
 (m-1-7) edge node{} (m-3-8)
 
   ;
\end{tikzpicture}
\end{center}
after a suitable grading shift.
\begin{proof}
The first three bullets were proved in \cite{Lin} and \cite{Lin3}. Let us recall the basic idea behind their proof. In the case $b_2^+=0$, the relevant reducible moduli spaces are transversely cut out and consist of a copy of $\mathbb{C}P^1$. In the case $b_2^+=1$, before adding perturbations in the blow-up, the moduli space of solutions is again $\mathbb{C}P^1$ but in this case the linearization has one dimensional cokernel; adding an extra perturbation in the blow-up corresponds to choosing a generic equivariant map
\begin{equation*}
s:\mathbb{C}P^1\rightarrow \mathbb{R}
\end{equation*}
where the action is the antipodal on the source and $x\mapsto -x$ on the target. The zero set $s^{-1}(0)$ is a generator of the one dimensional homology of $\mathbb{C}P^1/\jmath=\mathbb{R}P^2$, which corresponds to the claimed computation of multiplication by $Q$. Notice that a choice of such an equivariant section is equivalent to choosing a section of the tautological line bundle over $\mathbb{R}P^2$.
Similarly, when $b_2^+=2$ we are dealing with equivariant maps $\mathbb{C}P^1\rightarrow \mathbb{R}^2$; here the zero set consists of $4N+2$ points, which again descends to a generator of the zero dimensional homology of $\mathbb{R}P^2$.
\par
Now, if $b_2^+=4k+3$, the map is zero for grading reasons. Let us focus on the case $b_2^+=4k+2$. In this case, before adding perturbations the relevant moduli space of reducible solutions is a copy of $\mathbb{C}P^{2k+1}$, and the cokernel has real dimension $4k+2$. The perturbations in the blow-up correspond to equivariant maps
\begin{equation*}
\mathbb{C}P^{2k+1}\rightarrow \mathbb{R}^{4k+2},
\end{equation*}
where the action on $\mathbb{C}P^{2k+1}$ is given by
\begin{equation*}
[z_1:w_1:\cdots:z_{k+1}:w_{k+1}]\mapsto[-\bar{w}_1:\bar{z}_1:\cdots:-\bar{w}_{k+1}:\bar{z}_{k+1}]
\end{equation*}
and is again $x\mapsto -x$ on the target. We are interested in the zero locus of such map. On the other hand, when $k\geq 1$ any such map is homotopic to a map with no zeroes, because there is an equivariant map
\begin{equation*}
\mathbb{C}P^{2k+1}\rightarrow\mathbb{R}^{3k+3}
\end{equation*}
which is never zero. This is obtained by taking $k+1$-copies of the standard squaring map
\begin{align*}
\mathbb{C}^2&\rightarrow \mathbb{R}^3\\
(\alpha,\beta)&\mapsto\left(\frac{1}{2}(|\alpha|^2-|\beta|^2),\mathrm{Re}(\alpha,\bar{\beta}),\mathrm{Im}(\alpha\bar{\beta})\right),
\end{align*}
restricting to the unit sphere in $\mathbb{C}^{2k+2}$, and modding out by the $S^1$-action. This implies that the induced map vanishes. Finally, the cases $b_2^+=4k$ or $4k+1$ follow from this by stabilizing with an equivariant map with values in $\mathbb{R}^2$ and $\mathbb{R}$ respectively.
\end{proof}
\vspace{0.3cm}
\section{Proofs}
We begin by discussing some results about Floer theoretic properties of Stein cobordisms; these are essentially obtained by rephrasing the results in \cite{Sti} in the language of monopole Floer homology. The key observation here is that the assumption that the Stein filling is \textit{not} negative definite allows us to apply the gluing properties of the Seiberg-Witten invariants; we refer the reader to Chapter $3$ of \cite{KM} for a thorough discussion of the formal picture, and briefly discuss here what we will need.
\par
Suppose we are given a closed oriented $4$-manifold $X$ given as the union $X=X_1\cup_Y X_2$ along a hypersurface $Y$. We will assume that $b_2^+(X_i)\geq 1$ and $Y$ is a rational homology sphere. The latter assumption assures that a spin$^c$ structure $\spin_X$ on $X$ is determined by the two restrictions $\spin_i=\spin_X\lvert_{X_i}$. We will write $\spin_X=\spin_1\cup_Y \spin_2$, and denote by $\spin$ the restriction to $Y$. As $b_2^+(X_i)\geq 1$, we obtain relative invariants
\begin{align*}
\psi_{(X_1,\spin_1)}&\in \HMr_{\bullet}(Y,\spin)\\
\psi_{(X_2,\spin_2)}&\in \HMr_{\bullet}(-Y,\spin)
\end{align*}
where we can identify the last group with $\HMr^{\bullet}(Y,\spin)$ using Poincar\'e duality. The pairing theorem for the Seiberg-Witten invariants $\mathfrak{m}(X,\spin)$ says that
\begin{equation*}
\mathfrak{m}(X,\spin_1\cup_Y\spin_2)=\langle\psi_{(X_1,\spin_1)},\psi_{(X_2,\spin_2)}\rangle \quad \pmod 2
\end{equation*}
where
\begin{equation*}
\langle\cdot,\cdot\rangle: \HMr_{\bullet}(Y,\spin)\otimes \HMr^{\bullet}(Y,\spin)\rightarrow \ztwo
\end{equation*}
is the duality pairing. The relative invariants $\psi_{(X_i,\spin_i)}$ can be interpreted in terms of cobordism maps; for example $\psi_{(X_1,\spin_1)}$ is the image under
\begin{equation*}
\HMf_{\bullet}(X_1\setminus B^4,\spin_1)\rightarrow \HMf_{\bullet}(S^3)\rightarrow \HMf_{\bullet}(Y,\spin)
\end{equation*}
of $1\in\ztwo[[U]]=\HMf_{\bullet}(S^3)$; under our assumption $b_2^+(X_1)\geq1$, the image lies in $\HMr_{\bullet}(Y,\spin)=\mathrm{ker}(p_*)\subset \HMf_{\bullet}(Y,\spin)$. With this in mind, we have the following.
\begin{lemma}\label{isspin0}
Suppose $(W,\omega)$ is a Stein filling of $(Y,\xi)$, and $Y$ is rational homology sphere. Suppose that furthermore $b^+_2(W)>0$, $\s_{\xi}=\s$ is self-conjugate and $\HMr_{\bullet}(Y,\spin)=\ztwo$. Denoting by $\spin_{\omega}$ the canonical spin$^c$ structure of $\omega$, we have:
\begin{itemize}
\item the map induced by $(W\setminus B^4,\spin_{\omega})$ from $\HMf_{\bullet}(S^3)=\ztwo[[U]]$ to $\HMf_{\bullet}(Y,\spin)$ maps $1$ to the generator of $\HMr_{\bullet}(Y,\spin)\subset \HMf_{\bullet}(Y,\spin)$.
\item $\spin_{\omega}$ is self-conjugate.
\end{itemize}
\end{lemma}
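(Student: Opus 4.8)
The plan is to deduce both bullets from the nonvanishing of the relative Seiberg--Witten invariant $\psi_{(W,\spin_\omega)}\in\HMr_\bullet(Y,\spin)$, obtained by gluing $W$ into a closed K\"ahler surface (as in \cite{Sti}) and applying the pairing theorem recalled above. First I would invoke the results of Lisca--Mati\'c used in \cite{Sti}: the Stein domain $W$ embeds holomorphically into a minimal complex surface $X$ of general type with $b_2^+(X)>1$, in such a way that the canonical spin$^c$ structure $\spin_{\mathrm{can}}$ of $X$ restricts to $\spin_\omega$ on $W$. Setting $X_2:=X\setminus\mathrm{int}(W)$, so that $X=W\cup_Y X_2$, and using that $b_2^+(X)=b_2^+(W)+b_2^+(X_2)$ (valid since $Y$ is a rational homology sphere) together with the hypothesis $b_2^+(W)>0$, we may arrange in addition that $b_2^+(X_2)\geq1$ and that $K_X:=c_1(\spin_{\mathrm{can}})$ restricts nontrivially to $X_2$ (for instance by first enlarging $X$ via a fiber sum supported in $X_2$). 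Recall finally that, $X$ being a minimal surface of general type with $b_2^+>1$, its Seiberg--Witten invariant takes the value $1\in\ztwo$ on the two spin$^c$ structures $\spin_{\mathrm{can}}$ and $\overline{\spin_{\mathrm{can}}}$ and vanishes on every other one.

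For the first bullet, the pairing theorem for $X=W\cup_Y X_2$ gives
\[
1=\mathfrak{m}(X,\spin_{\mathrm{can}})=\big\langle\,\psi_{(W,\spin_\omega)},\ \psi_{(X_2,\,\spin_{\mathrm{can}}|_{X_2})}\,\big\rangle,
\]
whence $\psi_{(W,\spin_\omega)}\neq0$. By definition $\psi_{(W,\spin_\omega)}$ is the image of $1\in\ztwo[[U]]=\HMf_\bullet(S^3)$ under the cobordism map induced by $(W\setminus B^4,\spin_\omega)$, and since $b_2^+(W)\geq1$ this image lies in $\HMr_\bullet(Y,\spin)=\ker p_*\subset\HMf_\bullet(Y,\spin)$, as recalled in Section~\ref{review}. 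Since $\HMr_\bullet(Y,\spin)=\ztwo$ has a unique nonzero element, $\psi_{(W,\spin_\omega)}$ must be its generator; this is exactly the first bullet.

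For the second bullet, observe that $\overline{\spin_\omega}|_Y=\overline{\spin_\xi}=\spin_\xi$ because $\spin_\xi$ is self-conjugate, so the conjugation symmetry (available precisely because the relevant spin$^c$ structure on $Y$ is self-conjugate) identifies the cobordism map of $(W\setminus B^4,\overline{\spin_\omega})$ with the conjugate of the cobordism map of $(W\setminus B^4,\spin_\omega)$. As conjugation is a grading-preserving automorphism of $\HMr_\bullet(Y,\spin)=\ztwo$, hence the identity, we conclude that $\psi_{(W,\overline{\spin_\omega})}\neq0$ as well. Now consider the spin$^c$ structure $\spin':=\overline{\spin_\omega}\cup_Y\spin_{\mathrm{can}}|_{X_2}$ on $X$, which is well defined since both pieces restrict to $\spin_\xi$ on $Y$. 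The pairing theorem gives $\mathfrak{m}(X,\spin')=\langle\psi_{(W,\overline{\spin_\omega})},\psi_{(X_2,\spin_{\mathrm{can}}|_{X_2})}\rangle=1$, so $\spin'$ equals one of the two basic spin$^c$ structures $\spin_{\mathrm{can}}$, $\overline{\spin_{\mathrm{can}}}$ of $X$. But $\spin'$ and $\spin_{\mathrm{can}}$ restrict to the same spin$^c$ structure on $X_2$ by construction, while $\spin_{\mathrm{can}}|_{X_2}\neq\overline{\spin_{\mathrm{can}}}|_{X_2}$ because $K_X|_{X_2}\neq0$; hence $\spin'=\spin_{\mathrm{can}}$. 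Restricting this equality to $W$ yields $\overline{\spin_\omega}=\spin_\omega$, as claimed.

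The main obstacle lies in the first step: one must produce a closed \emph{minimal} surface of general type $X\supset W$ with $b_2^+(X)>1$, with matching canonical class, and with complement $X_2$ large enough that $K_X|_{X_2}\neq0$. The care needed to preserve minimality --- so that the only basic spin$^c$ structures of $X$ are $\spin_{\mathrm{can}}$ and $\overline{\spin_{\mathrm{can}}}$, which is what powers the argument for the second bullet --- is where the genuine topological input is concentrated; this is essentially the content of the constructions in \cite{Sti}. Granting it, the remaining steps are formal consequences of the gluing package of Chapter~3 of \cite{KM} and the K\"ahler computation of the Seiberg--Witten invariants.
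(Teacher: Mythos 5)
Your proposal is correct and follows essentially the same route as the paper: embed $W$ into a minimal K\"ahler surface $X$ of general type via Lisca--Mati\'c/Stipsicz, use that $\pm K_X$ are the only basic classes together with the pairing theorem for the first bullet, and play the conjugation symmetry against this for the second. The only cosmetic difference is that you run the second bullet directly using the condition $K_X|_{X_2}\neq 0$, whereas the paper argues by contradiction using the condition (supplied by \cite{Sti}, and implying yours mod $2$) that $X\setminus W$ is not spin, so that its canonical spin$^c$ structure is not self-conjugate.
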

\begin{proof}The key input is that any Stein domain $W$ can be embedded in K\"ahler surface $X$ which is minimal of general type \cite{LM}; furthermore, we can assume that $X\setminus W$ is not spin and $b_2^+(X\setminus W)\geq 1$ \cite{Sti}. Now, we know that $X$ has basic classes exactly the canonical classes $\pm K_X$, with Seiberg-Witten invariant $1$ \cite{Mor}. The pairing theorem therefore implies that
\begin{equation*}
\psi_{(W,\spin_{\omega})}\in \HMr_{\bullet}(Y,\spin)=\ztwo
\end{equation*}
is a non-zero element, and the first bullet follows.
\par
Suppose now that $\spin_{\omega}\neq\bar{\spin}_{\omega}$. Denoting by $\spin'$ the canonical spin$^c$ structure on $X\setminus W$, we have
\begin{equation*}
K_X=\spin_{\omega}\cup_Y\spin'\quad\text{and}\quad-K_X=\bar{\spin}_{\omega}\cup_Y\bar{\spin'}.
\end{equation*}
Furthemore, as $X\setminus W$ is not spin, $\spin'\neq \bar{\spin}'$. Now, conjugation acts trivially on $\HMr(Y,\spin)=\ztwo$, so by the pairing theorem $\bar{\spin}_{\omega}\cup_Y\spin'$ and $\spin_{\omega}\\cup_Y\bar{\spin'}$ are also basic classes, which is a contradiction as $\pm K_X$ are the only basic classes.
\end{proof}

\begin{lemma}
Suppose $(W,\omega)$ is a Stein filling of $(Y,\xi)$, and $Y$ is rational homology sphere. Then $b_1(W)=0$.\end{lemma}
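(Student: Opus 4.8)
The plan is to combine the restrictive topology of Stein domains with Poincar\'e--Lefschetz duality. Recall that a Stein domain of complex dimension $2$ admits a handle decomposition with handles of index at most $2$ --- equivalently, $W$ is homotopy equivalent to a CW complex of dimension $\le 2$ --- by Morse theory applied to a strictly plurisubharmonic exhaustion function (Eliashberg). In particular $H_k(W;\mathbb{Q})=0$ for all $k\ge 3$, and since we work over the field $\mathbb{Q}$ the universal coefficient theorem gives $H^3(W;\mathbb{Q})\cong\mathrm{Hom}(H_3(W;\mathbb{Q}),\mathbb{Q})=0$.

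Now I would invoke Poincar\'e--Lefschetz duality for the compact oriented $4$-manifold with boundary $W$, namely $H_1(W,Y;\mathbb{Q})\cong H^3(W;\mathbb{Q})$, which vanishes by the above. Feeding this into the portion
\[
H_1(Y;\mathbb{Q})\longrightarrow H_1(W;\mathbb{Q})\longrightarrow H_1(W,Y;\mathbb{Q})
\]
of the long exact sequence of the pair $(W,Y)$, and using that $H_1(Y;\mathbb{Q})=0$ because $Y$ is a rational homology sphere, we conclude $H_1(W;\mathbb{Q})=0$, i.e.\ $b_1(W)=0$.

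There is no genuine obstacle here: the argument is purely homological once one has the structural input on Stein domains, and the only points requiring minimal care are citing the handle decomposition with handles of index $\le\tfrac12\dim_{\mathbb{R}}W$ and working with rational coefficients so that universal coefficients is transparent. The rational homology sphere hypothesis on $Y$ is precisely what kills the contribution of $H_1(Y)$; without it the statement would fail.
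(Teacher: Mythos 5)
Your argument is correct, and it shares with the paper the same structural input---that a Stein domain of complex dimension $2$ is built from $B^4$ with handles of index at most $2$---but it exploits this fact along a different route. The paper turns the handle decomposition upside down: $W$ is obtained from $Y\times[0,1]$ by attaching handles of index $\geq 2$ only, so $\pi_1(Y)\rightarrow\pi_1(W)$ is surjective, and since $H_1(Y;\mathbb{Q})=0$ this kills $b_1(W)$ immediately (in fact it shows $H_1(W;\mathbb{Z})$ is a quotient of the finite group $H_1(Y;\mathbb{Z})$, which is slightly more than what is asserted). You instead stay entirely at the level of rational homology: the index bound gives $H_3(W;\mathbb{Q})=0$, hence $H^3(W;\mathbb{Q})=0$, Poincar\'e--Lefschetz duality gives $H_1(W,Y;\mathbb{Q})=0$, and the long exact sequence of the pair together with $H_1(Y;\mathbb{Q})=0$ yields $b_1(W)=0$. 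Both proofs are complete; the fundamental-group argument is shorter and retains integral (torsion) information, while yours avoids reversing the handle decomposition and any appeal to $\pi_1$, at the cost of invoking duality (which requires $W$ compact and oriented---automatic here since $W$ is a complex surface).
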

\begin{proof}
Topologically, any Stein domain is obtained from $B^4$ by attaching only $1$ and $2$-handles \cite{OzSti}; in particular, the map $\pi_1(Y)\rightarrow \pi_1(W)$ is surjective, and the result follows.
\end{proof}

\begin{proof}[Proof of Theorem \ref{main}]
Let us focus on the case of Type $I$, as the case of Type $II$ is identical. Denote by $\x$ the generator of $\HMr_{\bullet}(Y,\spin)=\ztwo$. By the previous lemma, the spin$^c$ structure $\spin_{\omega}$ on $W$ induced by $\omega$ is self-conjugate; in particular $W$ is spin, hence it has even intersection form. We have the commutative diagram
\begin{center}
\begin{tikzpicture}
\matrix (m) [matrix of math nodes,row sep=3em,column sep=3em,minimum width=2em]
  {
 \HSf_{\bullet}(S^3)  &\HMf_{\bullet}(S^3)\\
  \HSf_{\bullet}(Y,\spin)&\HMf_{\bullet}(Y,\spin)\\};
  \path[-stealth]
  (m-1-1) edge node[above]{$\iota_{S^3}$} (m-1-2)
  (m-1-1) edge node[left]{$\HSf_{\bullet}(W\setminus B^4,\spin_{\omega})$} (m-2-1)
  (m-2-1) edge node[above]{$\iota_{Y}$} (m-2-2)
  (m-1-2) edge node[right]{$\HMf_{\bullet}(W\setminus B^4,\spin_{\omega})$} (m-2-2)
  ;
\end{tikzpicture}
\end{center}
By Lemma \ref{isspin0}, $\HMf_{\bullet}(W\setminus B^4)$ maps $1$ to $\x$; by commutativity of the diagram, we see that $\HSf_{\bullet}(W\setminus B^4)$ maps $1\in\Rin$ to the top degree element of $\HSf_{\bullet}(Y,\spin)$ (cfr. Lemma \ref{observations}). Now, we have also the commutative diagram
\begin{center}
\begin{tikzpicture}
\matrix (m) [matrix of math nodes,row sep=3em,column sep=3em,minimum width=2em]
  {
 \HSf_{\bullet}(S^3)  &\HSb_{\bullet}(S^3)\\
  \HSf_{\bullet}(Y,\spin)&\HSb_{\bullet}(Y,\spin)\\};
  \path[-stealth]
  (m-1-1) edge node[above]{$p_*$} (m-1-2)
  (m-1-1) edge node[left]{$\HSf_{\bullet}(W\setminus B^4,\spin_{\omega})$} (m-2-1)
  (m-2-1) edge node[above]{$p_*$} (m-2-2)
  (m-1-2) edge node[right]{$\HSb_{\bullet}(W\setminus B^4,\spin_{\omega})$} (m-2-2)
  ;
\end{tikzpicture}
\end{center}
As both horizontal maps $p_*$ are injective, we see that $\HSb_{\bullet}(W\setminus B^4)$ is given by multiplication by an element of the form $QV^k$; therefore Proposition \ref{smallb} implies that $b_2^+=1$. Finally, as $\x$ has degree $-2h$ and $1$ has degree $-1$, the map has grading shift $-2h(Y,\spin)+1$, so we also readily obtain the formula for $b_2^-$.
\end{proof}
\vspace{0.3cm}
\begin{proof}[Proof of Corollary \ref{maincor}]
Given a contact three-manifold $(Y,\xi)$, there is a constant $C$ (depending only on $(Y,\xi)$) such that for any Stein filling $W$ we have  $3\sigma(W)+2\chi(W)\geq C$ (Theorem $12.3.14$ in \cite{OzSti}). As in our case $b_1(W)=0$, the latter inequality reads is $5b_2^+(W)\geq b_2^-(W)+C$. In particular an upper bound on $b_2^+(W)$ of fillings, as provided by Theorem \ref{main}, implies an upper bound $b_2^-(W)$, hence on the Euler characteristic.
\end{proof}

\vspace{0.5cm}
\section{Generalizations involving the contact invariant}\label{continv}
As mentioned in the introduction, our main result can be generalized in terms of how the contact invariant $\con(\xi)\in \HMt_{\bullet}(-Y,\spin_{\xi})$ defined in \cite{KMOS} (where we follow the notation of \cite{Ech}) interacts with $\Pin$-monopole Floer homology. In what follows, for a self-conjugate spin$^c$ structure $\spin$ we will work as it is customary with the dual groups
\begin{align*}
\HMt_{\bullet}(-Y,\spin)&\equiv\HMf^{\bullet}(Y,\spin)\\
\HSt_{\bullet}(-Y,\spin)&\equiv\HSf^{\bullet}(Y,\spin).
\end{align*} 
In this setup, recall that there are natural $\ztwo[[V]]$-submodules of $\HSt_{\bullet}(-Y,\spin)$ defined as follows. Consider the map
\begin{equation*}
i_*:\HSb_{\bullet}(-Y,\spin)\rightarrow \HSt_{\bullet}(-Y,\spin).
\end{equation*}
Here the first group can be identified with $\ztwo[[V,V^{-1}][Q]/(Q^3)$, and the map is an isomorphism in degrees high enough and zero in degrees low enough. We then define the $\alpha, \beta$ and $\gamma$-towers to be respectively the images 
\begin{equation*}
i_*(Q^2\cdot\ztwo[[V,V^{-1}]),\quad i_*(Q\cdot\ztwo[[V,V^{-1}]),\quad i_*(\ztwo[[V,V^{-1}]).
\end{equation*}
Notice that the chain involution $\jmath$ of $\Pin$-monopole Floer homology induces an involution at the homology level $\jmath_*$ on $\HMb_{\bullet}(-Y,\spin)$ and $\HMt_{\bullet}(-Y,\spin)$; as $i_*$ is equivariant with respect to these actions, $\jmath_*$ also induces an involution on $\HMr_{\bullet}(-Y,\spin)=\mathrm{ker}p_*=\mathrm{im}j_*$. As it will not cause confusion, in what follows we will also denote the image of $\con(\xi)$ in $\HMr_{\bullet}(-Y,\spin)$ again by $\con(\xi)$.
\begin{thm}\label{thm2}
Consider a contact rational homology sphere $(Y,\xi)$ with $\spin_{\xi}=\spin$ self-conjugate. Assume that the contact invariant $\con(\xi)\in\HMr_{\bullet}(-Y,\spin)$ is invariant under the action of $\jmath_*$, and denote its grading by $d(\xi)$. Consider
\begin{equation*}
\pi_*\con(\xi)\in\HSt_{\bullet}(-Y,\spin).
\end{equation*}
Suppose $(W,\omega)$ is a Stein filling which is \textbf{not} negative definite. Then:
\begin{itemize}
\item if $\pi_*\con(\xi)$ is a non-zero element of the $\beta$-tower, $W$ has even intersection form, $b_2^+=1$ and $b_2^-=5-4d(\xi)$.
\item if $\pi_*\con(\xi)$ is a non-zero element of the $\gamma$-tower, $W$ has even intersection form, $b_2^+=2$ and $b_2^-=10-4d(\xi)$.
\end{itemize}
\end{thm}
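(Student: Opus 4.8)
The plan is to mimic the proof of Theorem \ref{main}, replacing the role of the canonical spin$^c$ structure of the Stein filling with the contact invariant, and tracking it through the Gysin sequence into $\HSb_{\bullet}$. First I would invoke Lemma \ref{isspin0} (and the discussion preceding it) to set up the relative invariant picture: since $W$ is not negative definite, $b_2^+(W)>0$, and embedding $W$ in a minimal K\"ahler surface $X$ of general type together with the pairing theorem gives that $\spin_\omega$ is self-conjugate, so $W$ is spin and has even intersection form. The new ingredient is the compatibility of the contact invariant with cobordism maps: the filling $(W,\omega)$ gives a cobordism from $S^3$ to $Y$ (after removing a ball), and the standard naturality of $\con(\xi)$ under Stein cobordisms (as in \cite{KMOS}, \cite{Ech}) means that $\HMf_\bullet(W\setminus B^4,\spin_\omega)$ sends the bottom of $\HMf_\bullet(S^3)$ to (the Poincar\'e dual of) $\con(\xi)$ in $\HMr_\bullet(-Y,\spin)\subset\HMf^\bullet(Y,\spin)$ — equivalently, dualizing, the cobornism $\bar W$ from $-Y$ to $S^3$ sends $\con(\xi)$ to the generator.

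Next I would run the two commutative squares exactly as in the proof of Theorem \ref{main}. The first square relates the $\HSf$ and $\HMf$ cobordism maps via the maps $\iota$ of the Gysin sequence; since $\con(\xi)$ is assumed $\jmath_*$-invariant, Lemma \ref{observations} (the generator of reduced homology is in the image of $\iota$) lets me lift it to an element of $\HSf^\bullet(Y,\spin)$, namely $\pi_*\con(\xi)$ by definition, and commutativity forces $\HSf_\bullet(W\setminus B^4,\spin_\omega)$ to send $1\in\Rin$ to a class mapping onto $\pi_*\con(\xi)$. The second square relates $\HSf$ and $\HSb$ via the map $p_*$; here the key point is that $p_*$ on $\HSf^\bullet(Y,\spin)$ carries $\pi_*\con(\xi)$ to a nonzero element of $\HSb$ precisely in the stratum ($\beta$- or $\gamma$-tower) prescribed by the hypothesis, since by definition the $\alpha,\beta,\gamma$-towers are the images of $Q^2,Q,1$ times $\ztwo[[V,V^{-1}]$ under $i_*$, and $p_*$ and $i_*$ interact so that a class in the $\beta$-tower is detected by the $Q$-component and one in the $\gamma$-tower by the $Q^0$-component of $\HSb\cong\ztwo[[V,V^{-1}][Q]/(Q^3)$.

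Having identified the image of $1$ under $\HSb_\bullet(W\setminus B^4,\spin_\omega)$ as a nonzero multiple of $Q V^k$ (in the $\beta$ case) or $Q^2 V^k$ (in the $\gamma$ case), Proposition \ref{smallb} applies: multiplication by $QV^k$ forces $b_2^+=1$ and multiplication by $Q^2V^k$ forces $b_2^+=2$. Finally the value of $b_2^-$ is pinned down by the grading shift formula $\frac14(5b_2^+-b_2^-)$ from Proposition \ref{smallb}, together with the fact that $1\in\HSf_\bullet(S^3)$ has grading $-1$ while its image has grading $d(\xi)$ (the grading of $\con(\xi)$, noting $\pi_*$ has degree $0$ and the dual identification is grading-preserving up to the standard shift); solving for $b_2^-$ gives $5-4d(\xi)$ when $b_2^+=1$ and $10-4d(\xi)$ when $b_2^+=2$, as claimed.

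The main obstacle I expect is the bookkeeping around naturality and gradings: one must verify carefully that the contact invariant $\con(\xi)\in\HMt_\bullet(-Y,\spin)$, viewed under the dual identification $\HMt_\bullet(-Y,\spin)\equiv\HMf^\bullet(Y,\spin)$, really is the relative invariant of the Stein cobordism $W\setminus B^4$ in the sense needed here (this is implicit in \cite{KMOS}, \cite{Ech} but the self-conjugacy constraint and the passage to reduced homology require attention), and that the $\jmath_*$-invariance hypothesis is exactly what is needed to lift it along $\iota$ — as opposed to, say, needing it to lie in the image of $i_*$ from $\HSb$. The rest is a faithful transcription of the Type $I$/Type $II$ argument, where the $\beta$-tower plays the role of Type $I$ and the $\gamma$-tower the role of Type $II$.
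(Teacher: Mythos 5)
Your high-level skeleton (two commutative squares feeding into Proposition~\ref{smallb}, then the grading-shift formula to pin down $b_2^-$) is the same as the paper's, but the steps you lean on are rank-one lemmas whose hypotheses fail in the generality of Theorem~\ref{thm2}, and that is precisely where the new content lies. You invoke Lemma~\ref{isspin0} to get self-conjugacy of $\spin_\omega$ (hence evenness of the intersection form), and Lemma~\ref{observations} together with injectivity of $p_*$ to run the squares; all of these are proved under the assumption $\HMr_{\bullet}(Y,\spin)=\ztwo$, which is \emph{not} assumed here (the motivating example $\Sigma(2,8k+3,16k+7)$ has reduced homology of rank bigger than one). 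The proof of Lemma~\ref{isspin0} uses that conjugation acts trivially on $\HMr_{\bullet}(Y,\spin)=\ztwo$; without rank one this breaks down, and the paper replaces it by Lemma~\ref{isspin}, whose proof is a genuinely new step: one identifies $\con(\xi)$ with the relative invariant of the \emph{concave} piece $V=X\setminus W$ of the K\"ahler embedding (via the dilating-the-cone gluing of \cite{Ech}, not stated there explicitly), and then the $\jmath_*$-invariance hypothesis makes this relative invariant conjugation-invariant, so the pairing theorem together with the fact that $\pm K_X$ are the only basic classes forces $\spin_\omega=\bar\spin_\omega$. Your proposal locates the delicate point on the convex side (naturality for $W\setminus B^4$), which neither yields self-conjugacy nor substitutes for this lemma; naturality alone cannot rule out $\spin_\omega\neq\bar\spin_\omega$, since both $\spin_\omega$ and $\bar\spin_\omega$ could a priori induce maps nonvanishing on $\con(\xi)$.

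The second gap is the direction in which you run the Floer-theoretic argument. What naturality (Theorem~$1$ of \cite{Ech}) gives is that $\HMt_{\bullet}((W\setminus B^4)^{\dagger},\spin_\omega)$ sends $\con(\xi)$ to the bottom of the tower of $\HMt_{\bullet}(-S^3)$; it does \emph{not} say that $\HMf_{\bullet}(W\setminus B^4,\spin_\omega)$ sends $1$ to ``the Poincar\'e dual of $\con(\xi)$''---under the identification $\HMt_{\bullet}(-Y,\spin)\equiv\HMf^{\bullet}(Y,\spin)$ it only gives the pairing of $\HMf_{\bullet}(W\setminus B^4,\spin_\omega)(1)$ with $\con(\xi)$, which determines the image only in the rank-one setting of Theorem~\ref{main}. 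Consequently your first square does not force $\HSf_{\bullet}(W\setminus B^4,\spin_\omega)(1)$ to hit anything specific (note also that $\pi_*\con(\xi)$ lives in $\HSt_{\bullet}(-Y,\spin)$, not in the target of that map, so ``mapping onto $\pi_*\con(\xi)$'' does not literally parse), and your second square uses an injectivity of $p_*$ that is again special to rank one. The repair is exactly the paper's route: work with the ``to'' flavors and the reversed cobordism; the square intertwining $\pi$ with the cobordism maps shows that $\HSt_{\bullet}((W\setminus B^4)^{\dagger},\spin_\omega)$ carries $\pi_*\con(\xi)$ (a nonzero $\beta$- or $\gamma$-tower element by hypothesis) to the bottom of the $\alpha$-tower of $-S^3$; since these classes lie in the image of $i_*$, the square with $i_*$ then identifies $\HSb_{\bullet}((W\setminus B^4)^{\dagger},\spin_\omega)$ as multiplication by $QV^k$ (resp.\ $Q^2V^k$), and Proposition~\ref{smallb} gives $b_2^+=1$ (resp.\ $2$) with $b_2^-$ determined by the grading shift. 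Your final bookkeeping for $b_2^-$ is fine once this is in place.
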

This is a genuine generalization of Theorem \ref{main}; this is because, as the discussion below will show, under the assumptions of Theorem $1$ the contact invariant $\con(\xi)$ is the generator of $\HMr_{\bullet}(-Y,\spin)=\ztwo$. This is of course invariant under the action of $\jmath_*$, and furthermore when $(Y,\spin)$ has Type $I$ (resp. Type $II$), $\pi_*$ maps it to the $\beta$ (resp. $\gamma$) tower.
\begin{remark}
We have omitted in the statement the case $\pi_*\con(\xi)$ belongs to the $\alpha$-tower; this is because, as the proof below would show, $W$ would be negative definite.
\end{remark}
As a consequence, we have the following finiteness result.
\begin{cor}
Suppose that a contact rational homology sphere $(Y,\xi)$ with $\spin_{\xi}=\spin$ self-conjugate satisfies the following:
\begin{itemize}
\item $\con(\xi)$ is invariant under the action of $\jmath_*$;
\item $\pi_*\con(\xi)$ is a non-zero element in $i_*\left(\HSb_{\bullet}(-Y,\spin)\right)$.
\end{itemize}
Then the set of Euler characteristics of Stein fillings of $(Y,\xi)$ is finite.
\end{cor}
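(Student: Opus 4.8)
The statement generalizes Corollary \ref{maincor} to the setting of Section \ref{continv}, and the plan has the same two-part structure: first extract from the hypotheses a uniform bound $b_2^+(W)\le 2$ for every Stein filling $W$ of $(Y,\xi)$, and then feed this bound into the topological inequality for Stein fillings to bound the Euler characteristic. We may assume $(Y,\xi)$ is Stein fillable, as otherwise the statement is vacuous.

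For the uniform bound, if $W$ is negative definite then $b_2^+(W)=0$ and there is nothing to do, so suppose $W$ is not negative definite. When $\pi_*\con(\xi)$ happens to lie entirely in one of the $\alpha$, $\beta$, $\gamma$-towers, Theorem \ref{thm2} together with the remark following it already gives $b_2^+(W)\in\{0,1,2\}$. In general I would re-run the proof of Theorem \ref{thm2}: embedding $W$ in a minimal Kähler surface $X$ of general type with $X\setminus W$ not spin and $b_2^+(X\setminus W)\ge 1$ \cite{LM}, \cite{Sti}, the pairing theorem for the Seiberg--Witten invariants of $X$ (whose only basic classes are $\pm K_X$), traced through the commutative diagram relating the contact invariant, the Gysin sequence and the Stein cobordism maps, shows that the induced map $\HSb_\bullet(W\setminus B^4,\spin_\omega)$ does not annihilate the class detecting $\pi_*\con(\xi)$. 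This is precisely where the assumption that $\pi_*\con(\xi)$ is a \emph{non-zero} element of $i_*\bigl(\HSb_\bullet(-Y,\spin)\bigr)$ enters: it makes that class visible at the level of $\HSb_\bullet$, so the cobordism map is non-zero. By Proposition \ref{smallb}, however, $\HSb_\bullet(W\setminus B^4,\spin_\omega)$ vanishes identically as soon as $b_2^+(W)\ge 3$; hence $b_2^+(W)\le 2$. For this conclusion it is irrelevant which tower contains $\pi_*\con(\xi)$ -- that refinement is what pins down the exact values of $b_2^{\pm}$ in Theorem \ref{thm2}, but here only the inequality is needed.

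With $b_2^+(W)\le 2$ in hand the argument concludes exactly as in the proof of Corollary \ref{maincor}. By Theorem $12.3.14$ of \cite{OzSti} there is a constant $C=C(Y,\xi)$ with $3\sigma(W)+2\chi(W)\ge C$ for every Stein filling $W$; since $b_1(W)=0$ this reads $5b_2^+(W)\ge b_2^-(W)+C$, so $b_2^-(W)\le 10-C$. Thus $b_2^+(W)$ and $b_2^-(W)$ are non-negative integers bounded above, and therefore $\chi(W)=1+b_2^+(W)+b_2^-(W)$ attains only finitely many values.

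The main obstacle is the non-vanishing input in the second paragraph, i.e. translating the hypothesis on $\con(\xi)$ into non-triviality of the $\HSb_\bullet$-cobordism map of a non-negative definite filling. This requires unwinding the definition of the contact invariant of \cite{KMOS} as a relative Seiberg--Witten invariant, propagating it through the maps $p_*$, $i_*$, $\pi_*$ and the Gysin exact sequence in the $\Pin$-equivariant theory, and checking compatibility with the pairing theorem -- precisely the content of the proof of Theorem \ref{thm2}. The mild additional point here is that we only need the weaker statement that the map is non-zero (rather than its identification as multiplication by a specific $Q^{j}V^{k}$), which is what lets us dispense with any case analysis on the towers.
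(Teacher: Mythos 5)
Your proposal is correct and follows essentially the same route the paper intends: the corollary is stated as a consequence of (the proof of) Theorem \ref{thm2} -- Lemma \ref{isspin} gives self-conjugacy of $\spin_\omega$, naturality of $\con(\xi)$ plus the commutative diagrams with $\pi_*$ and $i_*$ show the $\HSb_{\bullet}$-cobordism map is non-zero, and Proposition \ref{smallb} then forces $b_2^+\le 2$ -- after which one repeats the argument of Corollary \ref{maincor} verbatim. Your observation that only non-vanishing of the $\HSb_{\bullet}$-map is needed (so no case analysis on which tower contains $\pi_*\con(\xi)$) is exactly the right point; the only cosmetic slip is writing the cobordism map for $W\setminus B^4$ rather than the reversed cobordism $(W\setminus B^4)^{\dagger}$, which does not affect the bound.
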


As a simple example to which our result applies, we look for $n\geq1$ at the Stein fillable contact structure $\xi_n$ on the Brieskorn homology sphere $Y_n=\Sigma(2,2n+1,4n+3)$ constructed in \cite{Kar} by Legendrian surgery on a certain stabilized Legendrian $(2,2n+1)$-torus knot. Provided $n>1$, this manifold has $\HMr_{\bullet}$ of rank strictly larger than $1$. Let us focus on the case $n=4k+1$ for $k\geq1$. Following \cite{Kar}, we see that
for some $\ztwo[[U]]$-module $J$,
\begin{equation*}
\HMt_{\bullet}(-Y_{4k+1})=\mathcal{T}^+_0\oplus (\ztwo[[U]]/U^{(2k+1)})_0 \oplus J^{\oplus 2}.
\end{equation*}
The computation of the Heegaard Floer contact invariant provided in \cite{Kar}, together with its identification with the one in monopole Floer homology (see \cite{Tau}, \cite{CGH} and subsequent papers), implies that $\con(\xi_{4k+1})$ is either $(0, U^{2k}, 0,0)$ or $(1, U^{2k},0,0)$ (i.e. either the bottom element of the second summand, or its sum with the bottom of the tower).
\par
In this example, both the involution $\jmath_*$ and the map $\pi_*$ are explicitly computable (see \cite{Sto}, \cite{Dai} for a more general discussion of Seifert-fibered spaces). In particular, the action of $\jmath_*$ fixes the first two summands and switches the two copies of $J$, and $\pi_*$ maps the contact invariant $\con(\xi_{4k+1})$ to the bottom of the $\gamma$-tower. Below we depict graphically (omitting the $J^{\oplus 2}$ summand, which is irrelevant for our purposes) the Gysin exact sequence in the case in which $k=1$.
\begin{center}
\begin{tikzpicture}
\matrix (m) [matrix of math nodes,row sep=0.5em,column sep=1em,minimum width=2em]
  {
 \ztwo && &\ztwo&&&&\ztwo\\
  \ztwo&&&\cdot&&&&\ztwo\\
  \ztwo&&&\ztwo&&&&\ztwo\\
  \cdot&&&\cdot&&&&\cdot\\
  \ztwo&&&\ztwo&\ztwo&&&\ztwo\\
  \cdot&&&\cdot&&&&\cdot\\
  \ztwo&&&\ztwo&\ztwo&&&\ztwo\\
  \cdot&&&\cdot&&&&\cdot\\
  \ztwo&&&\ztwo&\ztwo&&&\ztwo\\};
  \path[-stealth]
  (m-1-1) edge node{} (m-1-4) 
   (m-5-1) edge node{} (m-5-4) 
    (m-7-1) edge node{} (m-7-4) 
     (m-9-1) edge node{} (m-9-4) 
     (m-3-4) edge node{} (m-3-8) 
      (m-5-5) edge node{} (m-5-8) 
      (m-7-5) edge node{} (m-7-8) 
      (m-9-5) edge node{} (m-9-8)
      (m-1-8) edge node{} (m-2-1)
      (m-2-8) edge node{} (m-3-1)
      (m-1-4) edge[dotted, bend right] node{} (m-3-4)
      (m-3-4) edge[dotted, bend right] node{} (m-5-4)
      (m-5-4) edge[dotted, bend right] node{} (m-7-4)
      (m-7-4) edge[dotted, bend right] node{} (m-9-4)
      (m-5-5) edge[dotted, bend left] node{} (m-7-5)
      (m-7-5) edge[dotted, bend left] node{} (m-9-5)
      (m-1-1) edge[dashed, bend right] node{} (m-5-1)
       (m-5-1) edge[dashed, bend right] node{} (m-9-1)
       (m-1-8) edge[dashed, bend left] node{} (m-5-8)
       (m-5-8) edge[dashed, bend left] node{} (m-9-8)

   ;
\end{tikzpicture}
\end{center}
In the picture, the relevant part of $\HMt_{\bullet}$ is represented by the two middle columns, with the dotted arrows denoting the $U$-action. The dashed arrows represent the $V$-action on the $\gamma$-tower of $\HSt_{\bullet}$.
Our result then implies that any Stein filling of $(Y_{4k+1},\xi_{4k+1})$ which is not negative definite is spin with $b_2^+=2$ and $b_2^-=10$, hence has intersection form $2H\oplus E_8$.
\\
\par

As in the proof Theorem \ref{main}, an important step is to show the following.
\begin{lemma}\label{isspin}
Suppose $\con(\xi)\in\HMr_{\bullet}(-Y,\spin)$ is $\jmath_*$ invariant. Then for any Stein filling $(W,\omega)$ of $(Y,\xi)$ with $b_2^+>0$, $\spin_{\omega}$ is self-conjugate.
\end{lemma}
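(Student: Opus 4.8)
The plan is to adapt the argument of Lemma \ref{isspin0} to the more general setting, using the $\jmath_*$-invariance of $\con(\xi)$ in place of the rank-one hypothesis. First I would recall the geometric input: as in Lemma \ref{isspin0}, the Stein domain $W$ embeds in a minimal Kähler surface $X$ of general type \cite{LM}, and by \cite{Sti} we may arrange that the complement $Z = X \setminus W$ is \emph{not} spin and satisfies $b_2^+(Z) \geq 1$; since $b_2^+(W) > 0$ by hypothesis, we have $b_2^+(X_i) \geq 1$ for both pieces of the decomposition $X = W \cup_Y Z$, so the pairing theorem for Seiberg–Witten invariants applies. Recall also that the only basic classes of $X$ are $\pm K_X$, each with Seiberg–Witten invariant $1$ \cite{Mor}, and that $K_X = \spin_\omega \cup_Y \spin'$ where $\spin'$ is the canonical spin$^c$ structure on $Z$.

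Next I would identify the relative invariant $\psi_{(W,\spin_\omega)} \in \HMr_\bullet(Y,\spin)$ with (the dual of) the contact invariant. The point is that the contact invariant $\con(\xi) \in \HMt_\bullet(-Y,\spin) \equiv \HMf^\bullet(Y,\spin)$ of \cite{KMOS} pairs nontrivially with $\psi_{(W,\spin_\omega)}$ — indeed this is essentially the statement that a Stein filling realizes the contact class — and since $\mathfrak{m}(X, K_X) = 1$, the pairing theorem forces $\psi_{(W,\spin_\omega)}$ to be the element dual to $\con(\xi)$ inside $\HMr_\bullet(Y,\spin)$; in particular it is nonzero, and its image in $\HMr_\bullet(-Y,\spin)$ under Poincaré duality is (again denoted) $\con(\xi)$.

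Now suppose for contradiction that $\spin_\omega \neq \bar{\spin}_\omega$. Applying conjugation to the decomposition $K_X = \spin_\omega \cup_Y \spin'$ gives $-K_X = \bar{\spin}_\omega \cup_Y \bar{\spin}'$. Since $Z$ is not spin we have $\spin' \neq \bar{\spin}'$, so the four spin$^c$ structures $\spin_\omega \cup_Y \spin'$, $\bar{\spin}_\omega \cup_Y \spin'$, $\spin_\omega \cup_Y \bar{\spin}'$, $\bar{\spin}_\omega \cup_Y \bar{\spin}'$ on $X$ are pairwise distinct, and only the first and last are basic. The key step is then: conjugation symmetry of the relative invariants together with the $\jmath_*$-invariance of $\con(\xi)$ forces $\langle \psi_{(W,\bar{\spin}_\omega)}, \psi_{(Z,\spin')} \rangle$ to equal $\langle \psi_{(W,\spin_\omega)}, \psi_{(Z,\spin')} \rangle = \mathfrak{m}(X,K_X) = 1$. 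Concretely, $\psi_{(W,\bar{\spin}_\omega)} = \jmath_* \psi_{(W,\spin_\omega)}$ and, under the duality pairing, $\jmath_*$-invariance of $\con(\xi)$ translates to $\psi_{(W,\spin_\omega)}$ being $\jmath_*$-fixed in $\HMr_\bullet(Y,\spin)$, so $\psi_{(W,\bar{\spin}_\omega)} = \psi_{(W,\spin_\omega)}$ and the pairing is unchanged. Hence $\mathfrak{m}(X, \bar{\spin}_\omega \cup_Y \spin') = 1$, so $\bar{\spin}_\omega \cup_Y \spin'$ is a basic class distinct from $\pm K_X$, a contradiction. Therefore $\spin_\omega = \bar{\spin}_\omega$.

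The step I expect to be the main obstacle is the middle one: pinning down precisely that $\psi_{(W,\spin_\omega)}$ is the element of $\HMr_\bullet(Y,\spin)$ dual to $\con(\xi)$, and correctly tracking how $\jmath_*$-invariance of $\con(\xi)$ (which lives on $-Y$, in the "to" flavor) transfers to the $\jmath_*$-equivariance statement $\psi_{(W,\bar{\spin}_\omega)} = \psi_{(W,\spin_\omega)}$ needed above. This requires care with the identifications $\HMt_\bullet(-Y,\spin) \equiv \HMf^\bullet(Y,\spin)$, with the naturality of the conjugation involution under the cobordism maps induced by $W \setminus B^4$, and with the compatibility of the contact invariant with these dualities; once that bookkeeping is in place, the rest of the argument is a direct replay of Lemma \ref{isspin0}.
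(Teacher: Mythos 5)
There is a genuine gap in the middle step, and it is exactly the step you flagged. The pairing theorem only gives $\langle\psi_{(W,\spin_{\omega})},\psi_{(V,\spin')}\rangle=\mathfrak{m}(X,K_X)=1$, where $V=X\setminus W$ and $\spin'$ is its canonical spin$^c$ structure; when $\HMr_{\bullet}(Y,\spin)$ has rank bigger than one (the whole point of Lemma \ref{isspin} versus Lemma \ref{isspin0}) there is no well-defined ``element dual to $\con(\xi)$'', and $\psi_{(W,\spin_{\omega})}$ is only pinned down up to the annihilator of the class it pairs with. Consequently your key deduction --- that $\jmath_*$-invariance of $\con(\xi)$ ``translates to'' $\psi_{(W,\spin_{\omega})}$ being $\jmath_*$-fixed, so that $\psi_{(W,\bar{\spin}_{\omega})}=\psi_{(W,\spin_{\omega})}$ --- does not follow: invariance of a class in the dual group does not transfer to an element of $\HMr_{\bullet}(Y,\spin)$ that merely pairs nontrivially with it, and nothing in the hypotheses constrains the relative invariant of $W$ itself.

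The paper applies the $\jmath_*$-invariance on the other side of the splitting. Its main content is the identification, via the ``dilating the cone'' gluing argument of \cite{Ech}, of $\con(\xi)$ with the relative invariant of the concave piece: $\con(\xi)=\HMa_{\bullet}(V^{\dagger}\setminus B^4,\spin')(1)$. Conjugation symmetry of the cobordism maps then gives $\psi_{(V,\bar{\spin}')}=\jmath_*\psi_{(V,\spin')}=\jmath_*\con(\xi)=\con(\xi)$, so
\begin{equation*}
\mathfrak{m}(X,\spin_{\omega}\cup_Y\bar{\spin}')=\langle\psi_{(W,\spin_{\omega})},\psi_{(V,\bar{\spin}')}\rangle=\langle\psi_{(W,\spin_{\omega})},\con(\xi)\rangle=\mathfrak{m}(X,K_X)=1,
\end{equation*}
with no invariance statement about $\psi_{(W,\spin_{\omega})}$ needed; since $\spin'\neq\bar{\spin}'$ ($V$ is not spin) and the only basic classes are $\pm K_X$, this forces $\spin_{\omega}=\bar{\spin}_{\omega}$. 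Your computation could be repaired by using the adjointness of $\jmath_*$ under the duality pairing, $\langle\jmath_*a,b\rangle=\langle a,\jmath_*b\rangle$, together with the identification $\psi_{(V,\spin')}=\con(\xi)$ --- but that identification is precisely the analytic heart of the paper's proof, and your proposal neither states nor establishes it (the assertion that the Stein filling ``realizes the contact class'' is about the $W$-side and is not a substitute).
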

Given this lemma (which we will prove later) only missing ingredient is the following recently proved naturality for the contact invariant in monopole Floer homology.
\begin{thm}[Theorem $1$ of \cite{Ech}]
Let $(W,\omega)$ be a strong symplectic cobordism between $(Y,\xi)$ and $(Y',\xi')$, and let $\spin_{\omega}$ the corresponding spin$^c$ structure on $W$. We have
\begin{equation*}
\con(\xi)=\HMt_{\bullet}(W^{\dagger},\spin_{\omega})\con(\xi').
\end{equation*}
where $W^{\dagger}$ denotes the reversed cobordism from $-Y'$ to $-Y$,
\end{thm}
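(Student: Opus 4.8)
The plan is to recall the Taubes-type construction of the monopole contact invariant and then to compute the cobordism map $\HMt_{\bullet}(W^{\dagger},\spin_{\omega})$ using a family of perturbations adapted to $\omega$, arranged so that for a large parameter the only contributing Seiberg--Witten solution is the ``symplectic'' one, which interpolates between the distinguished solutions defining $\con(\xi')$ and $\con(\xi)$ at the two ends. (The statement above is quoted from \cite{Ech}; what follows is a sketch of how such a naturality result is proved in the monopole Floer framework.)

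First I would recall the construction of $\con(\xi)$. One fixes a contact form $\theta$ for $\xi$ and a compatible metric on $Y$, and perturbs the three-dimensional Seiberg--Witten equations on $-Y$ by a large multiple $r$ of the Taubes perturbation associated to $d\theta$. By Taubes' estimates, for $r$ large there is a distinguished irreducible, nondegenerate solution $\mathfrak{c}_{\xi}$ --- the vacuum solution, carrying no Reeb orbits --- which is a cycle in the perturbed Floer complex, and one sets $\con(\xi)=[\mathfrak{c}_{\xi}]\in\HMt_{\bullet}(-Y,\spin_{\xi})$, a class independent of all the choices. Next I would complete $W$ to $\hat{W}$ by attaching cylindrical ends $(-\infty,0]\times Y'$ and $[0,\infty)\times Y$, and extend $\omega$ over $\hat{W}$ so that it agrees with the symplectization form on these ends; this is possible precisely because $(W,\omega)$ is a \emph{strong} symplectic cobordism, so near each end $\omega$ admits an exact symplectization collar compatible with the corresponding contact form. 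On $\hat{W}$, in the canonical spin$^c$ structure $\spin_{\omega}$, I would perturb the four-dimensional Seiberg--Witten equations by $r$ times the Taubes perturbation of $\omega$, arranging the two cylindrical ends to carry the three-dimensional contact perturbations above, and study the moduli space $M_r$ of finite-energy solutions of index zero.

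The heart of the argument --- and the step I expect to be the main obstacle --- is the analytic claim that for $r$ large the moduli space $M_r$ is compact, transversely cut out, and a single point: the symplectic solution determined by $\omega$, i.e.\ the one associated to the empty pseudoholomorphic curve in the canonical class. This is a relative version, for a symplectic cobordism with contact ends, of Taubes' correspondence between Seiberg--Witten solutions and pseudoholomorphic curves: finite-energy solutions have controlled energy because $\omega$ is exact near the ends, and the large-$r$ structure theorem forces any solution to concentrate along a holomorphic curve dual to a homology class which, in the canonical spin$^c$ structure, must vanish, leaving only the vacuum solution, whose rigidity is the standard transversality computation for the canonical class. Transporting Taubes' compactness and transversality package from the closed (and symplectization) settings to a cobordism with two contact ends --- ruling out bubbling, end-breaking, and non-vacuum solutions for large $r$, and choosing weighted Sobolev spaces in which the contact-perturbed ends interact correctly with the interior perturbation --- is where the real work lies.

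Granting this, the unique element of $M_r$ limits at $-\infty$ to $\mathfrak{c}_{\xi'}$ and at $+\infty$ to $\mathfrak{c}_{\xi}$, so the chain map computed from these perturbations sends $\mathfrak{c}_{\xi'}$ to $\mathfrak{c}_{\xi}$; passing to homology gives $\HMt_{\bullet}(W^{\dagger},\spin_{\omega})\con(\xi')=\con(\xi)$. Since the induced map on homology does not depend on the perturbation data used to define it, the same equality holds for the standard cobordism map, which is the assertion. A final bookkeeping point is to confirm that $M_r$ genuinely has index zero and that the chosen $r$-dependent perturbations are admissible in the sense needed to define honest cobordism maps; this simultaneously verifies the degree-zero statement implicit in the theorem.
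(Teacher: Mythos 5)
First, a structural point: the paper does not prove this statement at all --- it is quoted as Theorem $1$ of \cite{Ech}, and the proof lives in that reference; the only related material in the present paper is the remark, in the proof of Lemma \ref{isspin}, that Echeverria's ``dilating the cone'' gluing from his Section $2$ identifies $\con(\xi)$ with a relative invariant of a concave filling. So there is no in-paper argument to compare yours against, and the relevant comparison is with \cite{Ech} itself.

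Measured against that, your sketch has a genuine gap and also starts from a different definition than the one in play. The contact invariant used here (\cite{KMOS}, \cite{Ech}) is defined via the almost-K\"ahler cone $[1,\infty)\times Y$ attached to $\mathbb{R}^+\times(-Y)$ --- the ``dilating the cone'' picture --- not via Taubes' large-$r$ perturbation and a distinguished vacuum solution; if you set things up with the Taubes-style element you must in addition prove that it agrees with the cone-defined class, which is a nontrivial identification you do not address. More seriously, the step you yourself flag as ``the main obstacle'' --- that for large $r$ the index-zero moduli space on the completed cobordism is compact, transversally cut out, and consists of a single solution interpolating between $\mathfrak{c}_{\xi'}$ and $\mathfrak{c}_{\xi}$ --- is asserted rather than proved, and it is essentially the entire content of the theorem: controlling the $r\to\infty$ limit, breaking along the two contact ends, and non-vacuum contributions on a cobordism with two contact boundary components is precisely the hard analysis that Echeverria's argument is structured to avoid, by working instead with the cone-based moduli spaces and a stretching/gluing argument (the same mechanism the present paper invokes in Lemma \ref{isspin}). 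Note also that even granting compactness and transversality, naturality only requires that the chain image of $\mathfrak{c}_{\xi'}$ equal $\mathfrak{c}_{\xi}$ up to terms that cancel or are boundaries; your claim that the moduli space is literally a single point is far stronger than needed and would itself require the relative version of the canonical-class rigidity argument, which is not supplied. As it stands, the proposal is a plan for a proof rather than a proof.
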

\begin{proof}[Proof of Theorem \ref{thm2}]
Recall that the contact invariant of $(S^3,\xi_{std})$ is the bottom of the tower of $\HMt_{\bullet}(-S^3,\xi_{std})$. By removing a standard ball around a point, we get a strong symplectic cobordism between $(S^3,\xi_{std})$ and $(Y,\xi)$, so that $\HMt_{\bullet}((W\setminus B^4)^{\dagger},\spin_{\omega})$ maps $\con(\xi)$ to the bottom of the tower of $\HMt_{\bullet}(-S^3)$. Suppose $W$ is not negative definite, so that by Lemma \ref{isspin} the induced spin$^c$ structure $\spin_{\omega}$ is self-conjugate. Then we get a commutative diagram of the form
\begin{center}
\begin{tikzpicture}
\matrix (m) [matrix of math nodes,row sep=3em,column sep=3em,minimum width=2em]
  {
 \HMt_{\bullet}(-Y,\spin)  &\HSt_{\bullet}(-Y,\spin)\\
  \HMt_{\bullet}(-S^3)&\HSt_{\bullet}(-S^3)\\};
  \path[-stealth]
  (m-1-1) edge node[above]{$\pi_{-Y}$} (m-1-2)
  (m-1-1) edge node[left]{$\HMt_{\bullet}((W\setminus B^4)^{\dagger},\spin_{\omega})$} (m-2-1)
  (m-2-1) edge node[above]{$\pi_{-S^3}$} (m-2-2)
  (m-1-2) edge node[right]{$\HSt_{\bullet}((W\setminus B^4)^{\dagger},\spin_{\omega})$} (m-2-2)
  ;
\end{tikzpicture}
\end{center}
Here $\pi_{-S^3}$ sends the bottom of the tower to the bottom of the $\alpha$-tower. Focusing now on the first bullet of the theorem, this implies that $\HSt_{\bullet}((W\setminus B^4)^{\dagger},\spin_{\omega})$ sends an element of the $\beta$-tower (i.e. $\pi_*\con(\xi)$) to the bottom of the $\alpha$-tower; as both these elements are in the image of the respective map $i_*$, looking at the commutative diagram
\begin{center}
\begin{tikzpicture}
\matrix (m) [matrix of math nodes,row sep=3em,column sep=3em,minimum width=2em]
  {
 \HSb_{\bullet}(-Y,\spin)  &\HSt_{\bullet}(-Y,\spin)\\
  \HSb_{\bullet}(-S^3)&\HSt_{\bullet}(-S^3)\\};
  \path[-stealth]
  (m-1-1) edge node[above]{$i_*$} (m-1-2)
  (m-1-1) edge node[left]{$\HSb_{\bullet}((W\setminus B^4)^{\dagger},\spin_{\omega})$} (m-2-1)
  (m-2-1) edge node[above]{$i_*$} (m-2-2)
  (m-1-2) edge node[right]{$\HSt_{\bullet}((W\setminus B^4)^{\dagger},\spin_{\omega})$} (m-2-2)
  ;
\end{tikzpicture}
\end{center}
we obtain from Lemma \ref{smallb} that $b_2^+=1$, and furthermore $b_2^-$ can be understood in terms of the grading shift.
\end{proof}

We conclude by proving Lemma \ref{isspin}, which is essentially a refinement of Lemma \ref{isspin0}.

\begin{proof}[Proof of Lemma \ref{isspin}]
Consider again the embedding $W\hookrightarrow X$ into a minimal K\"ahler surface of general type, with $V=X\setminus W$ not spin and $b_2^+(V)>0$. This embedding is K\"ahler, so in particular $V$ is a strong \textit{concave} filling of $(Y,\xi)$. This implies that $\con(\xi)$ is in fact the relative invariant of $(V^{\dagger},\spin_{\omega})$, i.e. 
\begin{equation*}
\con(\xi)=\HMa_{\bullet}(V^{\dagger}\setminus B^4,\spin_{\omega})(1)\in \HMr_{\bullet}(-Y,\spin).
\end{equation*}
While this is not explicitly stated in \cite{Ech}, the gluing results in the \textit{dilating the cone} approach of discussed in his Section $2$ applies directly to show a correspondence between moduli spaces on
\begin{equation*}
(\mathbb{R}^+\times\{-Y\})\cup([1,\infty)\times Y),
\end{equation*}
which are used to define the contact invariant $\con(\xi)$, and on
\begin{equation*}
(\mathbb{R}^+\times\{-Y\})\cup V
\end{equation*}
where in the latter case we use a symplectic form for which the cone is dilated enough. In the latter case all solutions are irreducible, so they define a chain representing the relative invariant of the cobordism $V^{\dagger}$.
\par
The lemma now follows as Lemma \ref{isspin0}, using the fact that $X$ has exactly two basic classes $\pm K_X$, the Seiberg-Witten invariants relative of to $\spin_1\cup_Y\spin_2$ is the coefficient of $1\in \HMt_{\bullet}(S^3)$ in
\begin{equation*}
\HMt_{\bullet}((W\setminus B^4)^{\dagger},\spin_1)) \circ\HMa_{\bullet}((V\setminus B^4)^{\dagger},\spin_2)(1)
\end{equation*}
and the fact that $\con(\xi)$ is invariant under the conjugation action (so that we also have $\con(\xi)=\HMa_{\bullet}(V^{\dagger}\setminus B^4,\bar{\spin}_{\omega})(1)$).
\end{proof}
\begin{remark}
A similiar identification of the contact invariant as a relative invariant is proved in the setting of Heegaard Floer homology in \cite{Pla}; there the author discusses the specific case in which $V$ is explicitly constructed in terms of the open book decomposition of $(Y,\xi)$.
\end{remark}

\vspace{0.5cm}

\bibliographystyle{alpha}
\bibliography{biblio}

\end{document}